\newtheorem{theorem}{Theorem}[section]
\newtheorem{lemma}[theorem]{Lemma}
\newtheorem{proposition}[theorem]{Proposition}
\newtheorem{corollary}[theorem]{Corollary}
\newtheorem{example}[theorem]{Example}
\newtheorem{remark}[theorem]{Remark}
\numberwithin{equation}{section}
\DeclareMathOperator{\pd}{pd}
\DeclareMathOperator{\Hom}{Hom}
\DeclareMathOperator{\End}{End}
\DeclareMathOperator{\Aut}{Aut}
\DeclareMathOperator{\module}{-mod}
\DeclareMathOperator{\gldim}{gl.dim}
\DeclareMathOperator{\fdim}{fin.dim}
\DeclareMathOperator{\sgldim}{sgl.dim}
\DeclareMathOperator{\additive}{add}
\DeclareMathOperator{\trace}{tr}
\DeclareMathOperator{\Ext}{Ext}
\title{Homological dimensions of crossed products}
\author{Liping Li}
\address{Department of Mathematics, University of California, Riverside, CA, 92521.}
\email{lipingli@math.ucr.edu}
\thanks{The author would like to thank Prof. Avramov and Prof. Kleiner for their valuable comments. On the 2014 Maurice Auslander International Conference the author described this work. Prof. Avramov kindly helped the author figure out a reasonable definition of lengths for objects in right bounded derived categories of left Noetherian rings. Prof. Kleiner pointed out that some results in this paper can be deduced from theory of separable extensions.}
\begin{document}

\begin{abstract}
In this paper we consider several homological dimensions of crossed products $A _{\alpha} ^{\sigma} G$, where $A$ is a left Noetherian ring and $G$ is a finite group. We revisit the induction and restriction functors in derived categories, generalizing a few classical results for separable extensions. The global dimension and finitistic dimension of $A ^{\sigma} _{\alpha} G$ are classified: global dimension of $A ^{\sigma} _{\alpha} G$ is either infinity or equal to that of $A$, and finitistic dimension of $A ^{\sigma} _{\alpha} G$ coincides with that of $A$. A criterion for skew group rings to have finite global dimensions is deduced. Under the hypothesis that $A$ is a semiprimary algebra containing a complete set of primitive orthogonal idempotents closed under the action of a Sylow $p$-subgroup $S \leqslant G$, we show that $A$ and $A _{\alpha} ^{\sigma} G$ share the same homological dimensions under extra assumptions, extending the main results in \cite{L1, L2}.
\end{abstract}

\maketitle

\section{Introduction}

Let $A$ be an associative ring with identity, and let $G$ be a group. Given a map $\sigma: G \to \Aut(A)$, the group of ring automorphisms of $A$, and a map $\alpha: G \times G \to U (A)$, the set of invertible elements in $A$, by imposing some conditions on them, we can define another associative ring $A ^{\sigma} _{\alpha} G$, called the \textit{crossed product} of $A$ with $G$. It is a \textit{group graded ring} (\cite{K}). For trivial $\alpha$ or trivial $\sigma$, we get a \textit{skew group ring} $A^{\sigma} G$ or a \textit{twisted group ring} $A_{\alpha} G$ correspondingly. When both maps are trivial, the crossed product coincides with ordinary group ring $AG$.

Group graded rings and their special cases crossed products and skew group rings are widely studied by many authors from the viewpoints of ring theory and representation theory; see \cite{ARS, B, CM, M, MW, P1, P2}. For instance, their homological dimensions are studied by Yi and Aljadeff in \cite{A, AY, Z}, and several criteria for the global dimension to be finite are described; Reiten and Riedtmann show that $A$ and its skew group algebra $A ^{\sigma} G$ share many properties for finite dimensional algebras $A$ and finite groups $G$ when the order of $G$ is invertible in $A$ (\cite{RR}); in \cite{L1, L2}, the author proves that these properties are still shared by $A$ and $A ^{\sigma} G$ for arbitrary finite groups $G$ under a much weaker assumption.

In this paper we mainly consider serval homological dimensions of crossed products such as global dimensions, finitistic dimensions, and strong global dimensions (defined in next section), for which we give a uniform definition by considering lengths of objects in right bounded derived categories. Based on techniques and known results introduced in \cite{A, AY, L1, L2}, we attack this problem from the viewpoint of representation theory. As the first step, we lift the classical induction and restriction functors to homotopy categories of complexes with finitely generated projective components, and show that some natural maps between modules give rise to chain maps. Consequently, $A ^{\sigma} _{\alpha} G$ and $A$ share the same homological dimensions. This phenomenon actually happens for quitely many \textit{separable extensions} (defined in Section 3). That is,

\begin{theorem}
Let $R$ be a left Noetherian ring and let $S$ be a subring which is also left Noetherian. Suppose that $1_S = 1_R$, $_SR$ is a finitely generated projective $S$-module, and $R_S$ is a flat right $S$-module. If $R$ is a separable extension over $S$, then $R$ and $S$ have the same global dimension, finitistic dimension, and strong global dimension.
\end{theorem}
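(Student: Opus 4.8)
The plan is to reduce everything to the interplay of the restriction functor $\operatorname{Res}\colon R\text{-mod}\to S\text{-mod}$, the induction functor $R\otimes_S-$, and the coinduction functor $\Hom_S({}_SR,-)$, together with a single pointwise identity for projective dimension. First I would record the formal properties forced by the hypotheses. The functor $\operatorname{Res}$ is exact and, because ${}_SR$ is finitely generated projective, it carries finitely generated projective $R$-modules to finitely generated projective $S$-modules; induction is exact because $R_S$ is flat, and it sends projectives to projectives since it is left adjoint to the exact functor $\operatorname{Res}$; coinduction is exact because ${}_SR$ is projective and preserves injectives as a right adjoint of $\operatorname{Res}$. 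Since $R$ is finitely generated as an $S$-module, both $\operatorname{Res}$ and induction preserve finite generation, and the left Noetherian hypotheses guarantee that finitely generated modules admit finitely generated projective resolutions, so all three dimensions may be computed on finitely generated modules and their right bounded complexes of finitely generated projectives.

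Separability enters through the splitting of the multiplication map $R\otimes_SR\to R$ as an $(R,R)$-bimodule map. I would use the associated separability idempotent to produce, for every $R$-module $M$, a natural $R$-linear section of the counit $R\otimes_S\operatorname{Res}M\to M$, exhibiting $M$ as a direct summand of $R\otimes_S\operatorname{Res}M$. Combined with the preservation properties above, this yields the key pointwise identity $\pd_RM=\pd_S\operatorname{Res}M$ for every $R$-module $M$: the inequality $\pd_S\operatorname{Res}M\le\pd_RM$ comes from restricting a projective resolution of $M$, while $\pd_RM\le\pd_R(R\otimes_S\operatorname{Res}M)\le\pd_S\operatorname{Res}M$ comes from the summand relation together with the exactness of induction applied to a projective resolution of $\operatorname{Res}M$. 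Taking suprema gives $\gldim R\le\gldim S$, and the same argument restricted to finitely generated modules gives $\fdim R\le\fdim S$; lifting the two natural chain maps to the homotopy category of right bounded complexes of finitely generated projectives, so that the summand relation becomes a homotopy splitting, transports the length function and gives $\sgldim R\le\sgldim S$.

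The main obstacle is the reverse inequality, that is, controlling an \emph{arbitrary} $S$-module $N$ rather than only those of the form $\operatorname{Res}M$. My plan is to realize $N$ as a direct summand of $\operatorname{Res}(R\otimes_SN)$ in $S\text{-mod}$; granting this, $\pd_SN\le\pd_S\operatorname{Res}(R\otimes_SN)=\pd_R(R\otimes_SN)\le\gldim R$, and taking suprema yields $\gldim S\le\gldim R$, with the finitely generated version giving $\fdim S\le\fdim R$ and the homotopy-category version giving $\sgldim S\le\sgldim R$. The crux is therefore a natural section of the unit $N\to\operatorname{Res}(R\otimes_SN)$ on the base-ring side, which is precisely where the separability datum must be combined with the finitely generated projective structure of ${}_SR$ to make the induced modules large enough to detect the homological dimension of $N$. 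This is the step I expect to require the most care, and it is the natural place to invoke the separability of the induction and coinduction functors supplied by the separability element; once both families of inequalities are in hand, the three equalities follow at once.
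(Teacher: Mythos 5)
Your first half is sound and matches the paper's route: the separability element $\sum_i a_i\otimes b_i$ gives a natural $R$-linear section $v\mapsto\sum_i a_i\otimes b_iv$ of the counit $R\otimes_S M\to M$ (Proposition 3.1(2) and 3.2(3) in the paper), which together with the preservation properties of induction and restriction yields $\pd_RM=\pd_SM$ for every $R$-module $M$ and hence $\gldim R\leqslant\gldim S$, $\fdim R\leqslant\fdim S$, and, after lifting to $K^-({}_RP)$, $\sgldim R\leqslant\sgldim S$. The genuine gap is in the reverse direction. You correctly isolate the crux --- a natural $S$-linear retraction of the unit $\eta_N\colon N\to R\otimes_S N$, $v\mapsto 1\otimes v$ --- but the tool you propose for it cannot supply it: a separability element splits the \emph{counit} (the multiplication map $R\otimes_SR\to R$), whereas a natural splitting of the \emph{unit} for all $N$ is equivalent to the existence of an $S$-bimodule retraction $\pi\colon R\to S$, i.e.\ to ${}_SR_S\cong S\oplus B$. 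This is a logically independent condition (the extension being \emph{split} rather than \emph{separable}); the paper does not derive it from separability either, but assumes it explicitly in Proposition 3.1(1) and 3.2(2) and verifies it separately for crossed products (${}_{A_\alpha^\sigma H}A_\alpha^\sigma G_{A_\alpha^\sigma H}\cong A_\alpha^\sigma H\oplus B$), then silently invokes it in the proof of Theorem 3.3 to get $\gldim S\leqslant\gldim R$.

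Moreover, no amount of care will extract the unit splitting from the hypotheses as stated. Take $S=T_2(k)$, the upper triangular $2\times 2$ matrices, inside $R=M_2(k)$: then $1_S=1_R$, both rings are Noetherian, ${}_SR\cong (Se_{22})^2$ and $R_S\cong(e_{11}S)^2$ are finitely generated projective, and $e_{11}\otimes e_{11}+e_{21}\otimes e_{12}$ is a separability element for $R$ over $S$; yet $\gldim R=0<1=\gldim S$ and likewise for the finitistic dimensions, and one checks directly that no $S$-bimodule retraction $M_2(k)\to T_2(k)$ exists. So the conclusion itself fails without the split hypothesis, and your plan of ``invoking the separability element'' at the crux is a step that would fail. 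To repair the argument, add the hypothesis ${}_SR_S\cong S\oplus B$ (as the paper's Proposition 3.1(1) does); the retraction $\pi$ then induces $\pi_N\colon R\otimes_SN\to N$, $r\otimes v\mapsto\pi(r)v$, which naturally splits the unit, and the rest of your outline, including the homotopy-category upgrade for the strong global dimension, goes through exactly as in the paper's proof of Theorem 3.3.
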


If $A$ is further commutative, by a result of Aljadeff (\cite{A}), a skew group ring $A ^{\sigma} G$ has finite global dimension if and only if so does $A$ and the trivial $A ^{\sigma} G$-module $A$ is projective (the second condition is true if and only if $A ^{\sigma} G$ is a separable extension over $A$ by \cite{NVV}). We classify global dimensions and finitistic dimensions of crossed products for arbitrary left Noetherian rings, and extend his result to noncommutative rings.

\begin{theorem}
Let $A ^{\sigma} _{\alpha} G$ be a crossed product, where $A$ is left Noetherian and $G$ is a finite group.
\begin{enumerate}
\item For every $M \in A ^{\sigma} _{\alpha} G \module$, its projective dimension $\pd _{A ^{\sigma} _{\alpha} G} M$ is either infinity or equal to $\pd_A M$ (when $\pd_A M < \infty$). Correspondingly, the global dimension $\gldim A ^{\sigma} _{\alpha} G$ is either infinity or equal to $\gldim A$ (when $\gldim A < \infty$), and the finitistic dimension $\fdim A ^{\sigma} _{\alpha} G = \fdim A$.
\item A skew group ring $A ^{\sigma} G$ has finite global dimension if and only $\gldim A < \infty$ and the trivial representation $A$ is a projective $A ^{\sigma} G$-module. Moreover, if the trivial representation is projective, $A ^{\sigma} H$ and $A$ have the same global dimension for every subgroup $H \leqslant G$.
\end{enumerate}
\end{theorem}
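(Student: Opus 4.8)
The plan is to exploit that $R := A^{\sigma}_{\alpha} G$ is a free module of rank $|G|$ on either side over $A$, with $A$-basis $\{\bar g\}_{g \in G}$, so that $A \subseteq R$ is a Frobenius extension: the projection $E \colon R \to A$ onto the $\bar 1$-coefficient is an $(A,A)$-bimodule map admitting a Frobenius system, and consequently the induction functor $\mathrm{Ind} = R \otimes_A -$ is isomorphic to the coinduction functor $\mathrm{Coind} = \Hom_A(R,-)$. First I would record the two routine functorial facts. Since ${}_AR$ is finitely generated free, restriction $\mathrm{Res}\colon R\module \to A\module$ is exact and carries projectives to projectives, whence $\pd_A M \leq \pd_R M$ for every $M$. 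Since $R_A$ is free, $\mathrm{Ind}$ is exact and carries projectives to projectives, while $\mathrm{Res}\,\mathrm{Ind}\,N \cong \bigoplus_{g\in G} {}^gN$ contains $N$ as a summand with all summands of equal $A$-projective dimension; together these give $\pd_R \mathrm{Ind}\,N = \pd_A N$ for every $A$-module $N$. Finally, the Eckmann--Shapiro isomorphism combined with $\mathrm{Coind} \cong \mathrm{Ind}$ yields $\Ext^i_R(W,\mathrm{Ind}\,N) \cong \Ext^i_A(\mathrm{Res}\,W, N)$ for all $i$.

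The crux of part (1) is the dichotomy $\pd_R M \in \{\infty, \pd_A M\}$, for which it suffices to prove that $\pd_R M < \infty$ implies $\pd_R M \le \pd_A M$. Writing $d = \pd_A M$ (so $d \le \pd_R M$), I would pass to the $d$-th syzygy $W = \Omega^d_R M$; since $\mathrm{Res}$ sends an $R$-projective resolution to an $A$-projective one, $\mathrm{Res}\,W$ is $A$-projective modulo a projective summand, and $\pd_R W = \pd_R M - d$ is finite. Thus it is enough to establish the following Key Lemma: an $R$-module $W$ with $\mathrm{Res}\,W$ projective and $\pd_R W < \infty$ is itself $R$-projective, whence $\pd_R M \le d$. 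I expect this lemma to be the main obstacle, and I would prove it by dimension shifting against the counit. For any $R$-module $U$ the counit $\mathrm{Ind}\,\mathrm{Res}\,U \twoheadrightarrow U$ is surjective with kernel $\Omega U$; the long exact sequence of $\Ext_R(W,-)$, together with the vanishing $\Ext^{i}_R(W,\mathrm{Ind}\,\mathrm{Res}\,U) \cong \Ext^i_A(\mathrm{Res}\,W,\mathrm{Res}\,U) = 0$ for $i \ge 1$ (here $\mathrm{Res}\,W$ is $A$-projective), yields $\Ext^{j}_R(W,U) \cong \Ext^{j+1}_R(W,\Omega U)$ for all $j \ge 1$. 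Iterating $m := \pd_R W$ times gives $\Ext^1_R(W,U) \cong \Ext^{m+1}_R(W,\Omega^{m}U) = 0$ for every $U$, so $W$ is projective. It is exactly this step that uses finiteness of $\pd_R W$ and that fails precisely in the cases where $\pd_R M = \infty$.

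The statements on global and finitistic dimension then follow formally. From $\pd_R\mathrm{Ind}\,N = \pd_A N$ one obtains $\gldim R \ge \gldim A$ and $\fdim R \ge \fdim A$; from the dichotomy, every finite $\pd_R M$ equals $\pd_A M$, which is bounded by $\gldim A$ (respectively by $\fdim A$), giving the reverse inequalities. Hence $\gldim R$ is either infinite or equal to $\gldim A$, and $\fdim R = \fdim A$.

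For part (2) I take $\alpha$ trivial, $R = A^{\sigma}G$. For (2a): if $\gldim R < \infty$, the dichotomy gives $\gldim A = \gldim R < \infty$, and the trivial module $A$, whose restriction to $A$ is the free module $A$ with $\pd_A A = 0$, has finite projective dimension over $R$, so by the per-module dichotomy $\pd_R A = 0$, i.e. the trivial representation is $R$-projective. Conversely, projectivity of the trivial module is equivalent, by \cite{NVV}, to $A^{\sigma}G$ being a separable extension of $A$; since the hypotheses of Theorem~1.1 hold ($R$ is free over the Noetherian ring $A$, hence left Noetherian, ${}_AR$ is finitely generated projective, and $R_A$ is flat), that theorem gives $\gldim R = \gldim A$, which is finite when $\gldim A < \infty$. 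For (2b): for $H \leqslant G$ the ring $A^{\sigma}G$ is free as a left $A^{\sigma}H$-module on a set of coset representatives, so restriction along $A^{\sigma}H \subseteq A^{\sigma}G$ preserves projectives; as the trivial $A^{\sigma}G$-module restricts to the trivial $A^{\sigma}H$-module, projectivity of the former over $A^{\sigma}G$ forces projectivity of the latter over $A^{\sigma}H$. Applying (2a) (equivalently, Theorem~1.1 via \cite{NVV}) to the crossed product $A^{\sigma}H$ then yields $\gldim A^{\sigma}H = \gldim A$, in both the finite and the infinite case.
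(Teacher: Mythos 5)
Your part (1) is correct and is essentially the paper's own argument: both proofs rest on restriction preserving projectives (giving $\pd_A M \leqslant \pd_{A^{\sigma}_{\alpha}G} M$), on the isomorphism of induction with coinduction together with the resulting Eckmann--Shapiro formula, and on running the long exact sequence of $\Ext$ against the counit surjection $A^{\sigma}_{\alpha}G \otimes_A N \to N$. The paper does this in a single step at the top degree $r = \pd_{A^{\sigma}_{\alpha}G} M$, whereas you first pass to the $d$-th syzygy and iterate a dimension shift; these are the same computation. The forward direction of (2) and the reduction of the subgroup statement to the case $H = G$ are also fine.

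The genuine gap is in the converse direction of (2): you claim that, by \cite{NVV}, projectivity of the trivial $A^{\sigma}G$-module is \emph{equivalent} to $A^{\sigma}G$ being a separable extension of $A$, and then invoke Theorem 1.1. That equivalence holds only for commutative $A$ (this is exactly how the paper states it in Proposition 4.2), and it genuinely fails in general: Example 4.5 of the paper exhibits a noncommutative finite dimensional $A$ with a $\mathbb{Z}/2$-action in characteristic $2$ for which the trivial representation is projective (since $\trace(1_x)=1_A$) but $A^{\sigma}G$ is not separable over $A$ (the trace map annihilates the one-dimensional center). Separability does imply projectivity of the trivial module, but you need the reverse implication, and your chain breaks precisely there. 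The conclusion you want is still true; the paper's route (Proposition 4.1) is to observe that projectivity of $A^{\sigma}G/\mathfrak{I}$ makes the functor $-^G \cong \Hom_{A^{\sigma}G}(A^{\sigma}G/\mathfrak{I}, -)$ exact, hence $\Hom_{A^{\sigma}G}(M,-) \cong \Hom_A(M,-)^G$ is exact whenever $_AM$ is projective; thus an $A^{\sigma}G$-module is projective if and only if its restriction to $A$ is, and looking at the $(\gldim A)$-th syzygy of an arbitrary module gives $\gldim A^{\sigma}G \leqslant \gldim A$. Note that your own Key Lemma cannot substitute for this, since in this direction one may not assume $\pd_{A^{\sigma}G} M < \infty$. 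Once (2a) is repaired in this way, your deduction of the subgroup statement goes through unchanged.
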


Unfortunately, in practice it is not easy to check whether the trivial representation is projective. However, many algebras (in particular finite dimensional algebras) are defined by using certain combinatorial structures such as quivers, posets, categories, etc. In this situation the actions of groups on the sets of vertices or the sets of objects play a central role. We then focus on a special case that $A$ is a semiprimary left Noetherian algebra over an algebraically closed field $k$, and suppose that there are a Sylow $p$-subgroup $S \leqslant G$ and a complete set $E = \{ e_i \} _{i \in [n]}$ of primitive orthogonal idempotents in $A$ closed under the action of $S$. Denote by $C(A)$ and $A^S$ the center of $A$ and the fixed algebra respectively. The following conclusion gives us a feasible criterion for the global dimension of $A ^{\sigma} _{\alpha} G$ to be finite, generalizing a main result in \cite{L1, L2} for skew group algebras.

\begin{theorem}
Let $A$ be a finite dimensional algebra, and let $G$, $S$, and $E$ be as above. Suppose that there is a domain $D \in C(A) \cap A^S$ containing $\alpha (x, y)$ and the $|S|$-th root of $h_x = \prod _{y \in S} \alpha (x, y)$ for every $x, y \in S$. Then $A _{\alpha} ^{\sigma} G $ has finite global dimension if and only if so does $A$ and the action of $S$ on $E$ is free. Furthermore, if $S$ acts freely on $E$, then $A _{\alpha} ^{\sigma} G$ and $A$ have the same global dimension.
\end{theorem}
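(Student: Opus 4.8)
The plan is to peel off the non-$p$-part of $G$ by a separability argument, untwist the cocycle $\alpha$ over the Sylow subgroup so that the crossed product becomes a skew group algebra, and then import the skew-group criterion together with Theorem 1.2. I may assume $\operatorname{char} k = p$ divides $|G|$; otherwise $S = \{1\}$, the action is vacuously free, $A^\sigma_\alpha S = A$, and the separability step below already yields $\gldim A^\sigma_\alpha G = \gldim A$.

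\emph{Reduction to $S$.} Set $R = A^\sigma_\alpha G$ and $R' = A^\sigma_\alpha S$, the sub-crossed-product on $S$. Fixing a transversal $T$ of $S$ in $G$ exhibits $R$ as a finitely generated free module over $R'$ on both sides, so $_{R'}R$ is finitely generated projective and $R_{R'}$ is flat; moreover $A$ left Noetherian and $G$ finite make $R$ and $R'$ left Noetherian. Since $S$ is a Sylow $p$-subgroup, $[G:S]$ is prime to $p$, hence a unit in $k$ and in $A$, which furnishes a separability idempotent for $R$ over $R'$ supported on $T$. Thus $R$ is a separable extension of $R'$ satisfying the hypotheses of Theorem 1.1, and $\gldim R = \gldim R'$. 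It therefore suffices to treat $A^\sigma_\alpha S$.

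\emph{Untwisting, the technical heart.} Because $\alpha$ takes values in the domain $D \subseteq C(A)\cap A^S$, the group $S$ acts trivially on these values, and the $2$-cocycle identity $\alpha(y,z)\alpha(x,yz) = \alpha(x,y)\alpha(xy,z)$, multiplied over $z \in S$, gives $\alpha(x,y)^{|S|} = h_x h_y h_{xy}^{-1}$ in $D$ for all $x,y\in S$, where $h_x = \prod_{z\in S}\alpha(x,z)$. Let $r_x\in D$ be the given $|S|$-th root of $h_x$. Then $r_x r_y r_{xy}^{-1}$ and $\alpha(x,y)$ have equal $|S|$-th powers, so their ratio $\zeta$ is an $|S|$-th root of unity; as $|S|$ is a power of $p$ and $\operatorname{Frac}(D)$ has characteristic $p$, we get $\zeta = 1$, and since the relation is an identity between elements of $D$ it descends to $\alpha(x,y) = r_x r_y r_{xy}^{-1}$ in $A$. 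Each $r_x$ is a unit of $A$ (its $|S|$-th power $h_x$ is), central, and $S$-fixed, so $\bar x \mapsto r_x\hat x$ defines an isomorphism $A^\sigma_\alpha S \xrightarrow{\sim} A^\sigma S$ onto the skew group algebra, restricting to the identity on $A$ and hence compatible with the $S$-action on $E$. The delicate point, and what I expect to be the main obstacle, is exactly this descent: a priori $\zeta$ only satisfies $\zeta^{|S|}=1$ inside $C(A)$, where it is merely unipotent rather than $1$, and it is the domain hypothesis on $D$ (no non-trivial $p$-power roots of unity in characteristic $p$) that forces $\zeta = 1$.

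\emph{Conclusion.} After the isomorphism $A^\sigma_\alpha S \cong A^\sigma S$, Theorem 1.2(2) says $\gldim A^\sigma S < \infty$ exactly when $\gldim A < \infty$ and the trivial representation $A$ is projective over $A^\sigma S$. It remains to identify this projectivity with freeness of the $S$-action on $E$, which is the skew-group-algebra result of \cite{L1, L2}: decomposing $1 = \sum_i e_i$ into $S$-orbits, a free orbit of length $|S|$ contributes a summand on which $S$ permutes the idempotents regularly, so the trivial module splits off as a projective, whereas a non-trivial stabilizer $S_i \leqslant S$ is a non-trivial $p$-group and produces a $kS_i$-type direct factor whose trivial module has infinite projective dimension in characteristic $p$. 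Chaining the three reductions gives $A^\sigma_\alpha G$ of finite global dimension if and only if $\gldim A < \infty$ and $S$ acts freely on $E$. Finally, when $S$ acts freely the global dimension is finite, so Theorem 1.2(1) forces $\gldim A^\sigma_\alpha G = \gldim A$; equivalently this equality follows from the chain $\gldim A^\sigma_\alpha G = \gldim A^\sigma_\alpha S = \gldim A^\sigma S = \gldim A$, the last step being the subgroup statement of Theorem 1.2(2).
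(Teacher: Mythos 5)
Your proposal is correct and follows essentially the same route as the paper: reduce to the Sylow $p$-subgroup $S$ via separability of $A^{\sigma}_{\alpha}G$ over $A^{\sigma}_{\alpha}S$ (Corollary 3.7), untwist $\alpha$ to a skew group algebra using the identity $\alpha(x,y)^{|S|}=h_xh_yh_{xy}^{-1}$ and the uniqueness of $p$-power roots in the domain $D$ (Lemmas 5.8--5.9), and then invoke the skew-group-algebra criterion of \cite{L1, L2}. The one loose spot is your gloss on the non-free case --- the corner $e(\bar{A}^{\sigma}_{\alpha}S_i)e\cong k_{\alpha}S_i$ is a corner algebra, not a direct factor, and the paper's Proposition 5.2 must pass through $\Ext^1(S_e,S_e)\neq 0$ and the strong no loop theorem of \cite{ILP} to get $\pd S_e=\infty$ --- but since you, like the paper, ultimately defer to \cite{L1, L2} for that implication, the argument stands.
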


The paper is organized as follows. For convenience of the reader, in the next section we describe some preliminary results on crossed products, and introduce a uniform definition for the homological dimensions we study in this paper. In Section 3 we lift the classical induction and restriction functors between module categories to functors between homotopy categories, and revisit many classical results in the derived categories. Theorem 1.1 is proved there. In Section 4 we focus on global dimensions and prove Theorem 1.2. The last section is devoted to the special case mentioned before. Using the strong no loop conjecture recently established in \cite{ILP}, we prove one direction of The last theorem. The other direction follows from normalization of parameters.

Here are some notations and conventions. All modules we consider in this paper are finitely generated left modules. For a ring $R$, $R\module$ is the category of finitely generated $R$-modules. By $\gldim R$, $\fdim R$, and $\sgldim R$ we mean the global dimension, finitistic dimension, and strong global dimension respectively. For $M \in R\module$, $\pd_R M$ is its projective dimension. Composition of maps and morphisms is from right to left.

\section{preliminaries}

Firstly recall the construction of crossed products. Fix an associative ring $A$ with identity, a group $G$, two maps
\begin{equation*}
\sigma: G \to \Aut (A), \quad x \mapsto \sigma_x,
\end{equation*}
and
\begin{equation*}
\alpha: G \times G \to U(A), \quad (x, y) \mapsto \alpha (x, y).
\end{equation*}
Following Marcus in \cite{M}, the pair $(\sigma, \alpha)$ is called a \textit{parameter set} of $G$ in $A$ if the following conditions are satisfied for all $x, y, z \in G$:
\begin{enumerate}
\item $\sigma_x \sigma_y = \iota _{\alpha (x, y)} \sigma_{xy}$, where $\iota _{\alpha (x, y)}$ is the inner automorphism induced by $\alpha (x, y) \in U(A)$;
\item $\alpha (x, y) \alpha (xy, z) = \sigma_x (\alpha (y, z)) \alpha (x, yz)$.
\end{enumerate}

The crossed product $A ^{\sigma} _{\alpha} G = \bigoplus _{x \in G} A \sigma_x$, where $A\sigma_x$ is a free $A$-module of rank 1 with basis $\sigma_x$. The multiplication map $\ast$ in $A^{\sigma} _{\alpha} G$ is determined by the following formula:
\begin{equation*}
(a\sigma_x) \ast (b \sigma_y) = a \sigma_x (b) \alpha (x, y) \sigma_{xy}.
\end{equation*}
The above two conditions imposed on the pair $(\sigma, \alpha)$ are equivalent to the associativity of $A ^{\sigma} _{\alpha} G$. The restricted multiplication in $A$ is denote by $\cdot$.

Two crossed products $A ^{\sigma} _{\alpha} G$ and $A ^{\sigma'} _{\alpha'} G$ are \textit{equivalent} if there is a $G$-graded algebra isomorphism $\varphi: A ^{\sigma} _{\alpha} G \to A ^{\sigma'} _{\alpha'} G$ such that the restricted automorphism $\varphi_A$ on $A$ is the identity map. Correspondingly, two parameter sets $(\sigma, \alpha)$ and $(\sigma', \alpha')$ are \textit{equivalent} if there exist elements $u_x \in U(A)$ satisfying $\sigma'_x = \sigma_x \iota_{u_x}$ and $\alpha'(x, y) = u_x \sigma_x (u_y) \alpha (x, y) u_{xy}^{-1}$. Since we only consider homological dimensions of crossed products in this paper, equivalent crossed products may be identified.

The following proposition is a direct consequence of Theorem 1.3.7 in \cite{M}.

\begin{proposition}
Two crossed products $A ^{\sigma} _{\alpha} G$ and $A ^{\sigma'} _{\alpha'} G$ are equivalent if and only if the parameter sets $(\sigma, \alpha)$ and $(\sigma', \alpha')$ are equivalent.
\end{proposition}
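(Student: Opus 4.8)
Since this is stated as a direct consequence of Marcus's Theorem 1.3.7, the plan is to make the bijection between ``$G$-graded algebra isomorphisms fixing $A$'' and ``families of units $(u_x)_{x \in G}$'' completely explicit, working throughout in the standard normalization $\sigma_1 = \sigma'_1 = \mathrm{id}_A$ and $\alpha(1,y) = \alpha(x,1) = \alpha'(1,y) = \alpha'(x,1) = 1$ (permissible since equivalent crossed products may be identified). Under this normalization the degree-one component of each crossed product is $A$ itself, and the three identities $a\sigma_x = a \ast \sigma_x$, $\sigma_x \ast a = \sigma_x(a)\sigma_x$, and $\sigma_x \ast \sigma_y = \alpha(x,y)\sigma_{xy}$ hold; they reduce every verification below to comparing coefficients in the free rank-one components $A\sigma_x$ and $A\sigma'_x$.

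For the ``if'' direction I would start from units $u_x \in U(A)$ realizing the equivalence of parameter sets and define $\varphi \colon A^{\sigma}_{\alpha} G \to A^{\sigma'}_{\alpha'} G$ additively by $\varphi(a\sigma_x) = a u_x \sigma'_x$ (with the precise placement of $u_x$ fixed as discussed below). Because each $u_x$ is a unit, $\varphi$ carries $A\sigma_x$ bijectively onto $A\sigma'_x$, hence is a graded additive isomorphism; the normalization forces $u_1 = 1$, so $\varphi_A = \mathrm{id}_A$. The only real content is multiplicativity: expanding $\varphi\bigl((a\sigma_x)\ast(b\sigma_y)\bigr)$ and $\varphi(a\sigma_x)\ast'\varphi(b\sigma_y)$ and comparing $\sigma'_{xy}$-coefficients leaves exactly the relations that (after the placement normalization below) define equivalence of parameter sets, so that hypothesis is precisely what makes $\varphi$ an algebra map. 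Conversely, given a $G$-graded algebra isomorphism $\varphi$ with $\varphi_A = \mathrm{id}_A$, grading gives $\varphi(A\sigma_x) = A\sigma'_x$, and since these components are free of rank one over $A$ we may write $\varphi(\sigma_x) = u_x \sigma'_x$; applying the same reasoning to $\varphi^{-1}$ shows $u_x \in U(A)$. Evaluating multiplicativity of $\varphi$ on $\sigma_x \ast a = \sigma_x(a)\sigma_x$ and comparing $\sigma'_x$-coefficients recovers the relation between $\sigma_x$ and $\sigma'_x$, while evaluating it on $\sigma_x \ast \sigma_y = \alpha(x,y)\sigma_{xy}$ and comparing $\sigma'_{xy}$-coefficients recovers the relation between $\alpha$ and $\alpha'$. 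Thus $(\sigma,\alpha)$ and $(\sigma',\alpha')$ are equivalent, completing both implications.

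I expect the only delicate point to be the bookkeeping of conventions rather than any genuine algebraic obstruction. Depending on whether the units are placed to the left or right of $\sigma'_x$ and whether one uses $u_x$ or $u_x^{-1}$, the coefficient comparison on $\sigma_x \ast a$ produces $\sigma'_x$ as $\sigma_x$ postcomposed or precomposed with an inner automorphism, and these two forms differ by conjugation by $\sigma_x(u_x)$; one must therefore choose the placement so that the resulting identities read exactly $\sigma'_x = \sigma_x \iota_{u_x}$ and $\alpha'(x,y) = u_x \sigma_x(u_y)\alpha(x,y)u_{xy}^{-1}$. Matching this normalization to the statement is precisely the verification carried out in Theorem 1.3.7 of \cite{M}, which I would cite for the confirmation that the two coboundary-type relations coincide.
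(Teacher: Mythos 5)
Your proof is correct and follows essentially the same route as the paper, which gives no argument beyond declaring the proposition a direct consequence of Theorem 1.3.7 of \cite{M}; your explicit basis-change computation ($\sigma'_x = u_x\sigma_x$, coefficient comparison on $\sigma_x \ast a$ and $\sigma_x \ast \sigma_y$, units extracted from a graded isomorphism and inverted via $\varphi^{-1}$) is exactly the content of that citation. The convention issue you flag is real but is pure bookkeeping rather than a gap: the comparison naturally produces $\sigma'_x = \iota_{u_x}\sigma_x$, which matches the paper's $\sigma_x\iota_{u_x}$ only after fixing the convention for inner automorphisms and the placement of the units, as you correctly anticipate.
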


Given a set of invertible elements $\{ u_x \in U(A) \} _{x \in G}$, we can define a new basis $\{ \sigma'_x = u_x \sigma_x \} _{x \in G}$ for $A ^{\sigma} _{\alpha} G$. Under the new basis we get a crossed product $A ^{\sigma'} _{\alpha'} G$ which is equivalent to $A ^{\sigma} _{\alpha} G$, where $\sigma'_x = \sigma_x \iota _{u_x}$ and $\alpha' (x, y) = u_x \sigma_x (u_y) \alpha (x, y) u_{xy} ^{-1}$. Therefore, by proper basis change we can make $1_A \sigma_{1_G}$ the identity of $A ^{\sigma} _{\alpha} G$ and denote it by 1. Moreover, it is straightforward to check that $\alpha (1_G, 1_G) = \alpha (x, 1_G) = \alpha (1_G, y) = 1_A$ for all $x, y \in G$. In this paper we always assume that our crossed products satisfy these conventions.

Now we define the \textit{trivial representation} of $A ^{\sigma} _{\alpha} G$. Let $\mathfrak{I}$ be the left ideal generated by elements in $\{ \sigma_x -1 \mid 1_G \neq x \in G \}$. In general $\mathfrak{I}$ is only a left ideal, but not a right ideal. For skew group rings, $\mathfrak{I}$ consists of all elements of the form $\sum _{x \in G} a_x \sigma_x$ such that $a_x \in A$ and $\sum_{x \in G} a_x = 0$. Consequently, the quotient module $A / \mathfrak{I} \cong A$ by identifying $\sum_{x \in G} \sigma_x$ with 1, and the module action is determined by $\sigma_x \cdot a = \sigma_x (a)$ for $x \in G$ and $a \in A$. However, for crossed products, the existence of nontrivial $\alpha$ makes the structure of $\mathfrak{I}$ much more complicated. For example, since $\sigma_y - 1 \in \mathfrak{I}$, we get $\sigma_x \ast (\sigma_y -1) = \alpha(x, y) \sigma_{xy} - \sigma_x \in \mathfrak{I}$. But then $\alpha (x, y) \sigma_{xy} -1 \in \mathfrak{I}$. Since $\sigma_{xy} - 1 \in \mathfrak{I}$ as well, we deduce that $\alpha (x, y) - 1 \in \mathfrak{I}$ for every $x, y \in G$. We define the trivial representation to be the quotient $A ^{\sigma} _{\alpha} G / \mathfrak{I}$, which is clearly an $A ^{\sigma} _{\alpha} G$-module.

The following proposition gives a reason for the name of $A ^{\sigma} _{\alpha} G / \mathfrak{I}$. Recall for $M \in A ^{\sigma} _{\alpha} G \module$, $M^G$ is defined to be the set of all elements $v \in M$ such that $\sigma_x v = v$ for every $x \in G$. The \textit{fixed algebra} $A^G$ is defined to be the set of elements $a \in A$ such that $\sigma_x (a) = a$ for every $x \in G$. It is clear that $M^G$ is an $A^G$-module. Moreover, the map sending $M$ to $M^G$ is functorial since for $f \in \Hom _{A ^{\sigma} _{\alpha} G} (M, N)$, the restricted map sends $M^G$ into $N^G$. Denote this functor by $-^G$.

\begin{proposition}
Let $M, N \in A ^{\sigma} _{\alpha} G \module$. Then:
\begin{enumerate}
\item For an arbitrary $v \in M$, $v$ is contained in $M^G$ if and only if $\mathfrak{I} v = 0$.
\item There is a natural isomorphism $-^G \cong \Hom _{A ^{\sigma} _{\alpha} G} (A ^{\sigma} _{\alpha} G / \mathfrak{I}, -)$.
\item $\Hom _{A ^{\sigma} _{\alpha} G} (M, N) \cong \Hom_A (M, N) ^G$.
\end{enumerate}
\end{proposition}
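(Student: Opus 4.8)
The plan is to handle the three parts in order, with parts (2) and (3) resting on (1). For part (1), I would argue directly from the definition of $\mathfrak{I}$ as the left ideal generated by $\{\sigma_x - 1 \mid 1_G \neq x \in G\}$. If $v \in M^G$, then each generator kills $v$, since $(\sigma_x - 1)v = \sigma_x v - v = 0$; as any element of $\mathfrak{I}$ is a left $A^{\sigma}_{\alpha}G$-combination of these generators, the whole ideal annihilates $v$, so $\mathfrak{I}v = 0$. Conversely, if $\mathfrak{I}v = 0$, then in particular every generator annihilates $v$, giving $\sigma_x v = v$ for $x \neq 1_G$; together with $\sigma_{1_G}v = v$ (recall that, under our conventions, $1_A\sigma_{1_G}$ is the identity), this is precisely the statement $v \in M^G$.

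For part (2), I would use the standard identification of homomorphisms out of a cyclic quotient with an annihilator submodule. Define $\Phi_M \colon \Hom_{A^{\sigma}_{\alpha}G}(A^{\sigma}_{\alpha}G/\mathfrak{I}, M) \to M^G$ by evaluating a homomorphism $\phi$ at the coset $\bar 1$ of the identity. Since $\mathfrak{I}\bar 1 = 0$ in the quotient, the image $\phi(\bar 1)$ satisfies $\mathfrak{I}\phi(\bar 1) = \phi(\mathfrak{I}\bar 1) = 0$ and hence lies in $M^G$ by part (1). Injectivity is immediate because $\bar 1$ generates the quotient, and surjectivity follows by sending $v \in M^G$ to the assignment $\bar r \mapsto rv$, whose well-definedness again uses part (1) (if $r - r' \in \mathfrak{I}$ then $(r-r')v = 0$). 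Naturality in $M$ is the routine observation that post-composition with a morphism $M \to N$ commutes with evaluation at $\bar 1$, so $\Phi$ assembles into a natural isomorphism $-^G \cong \Hom_{A^{\sigma}_{\alpha}G}(A^{\sigma}_{\alpha}G/\mathfrak{I}, -)$.

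For part (3), the idea is to equip $\Hom_A(M,N)$ with a $G$-action and identify its fixed points with the $A^{\sigma}_{\alpha}G$-homomorphisms. Writing $u_x = 1_A\sigma_x$, each $u_x$ is a unit of $A^{\sigma}_{\alpha}G$ (indeed $u_x u_{x^{-1}} = \alpha(x,x^{-1}) \in U(A)$) that conjugates $A$ by the automorphism $\sigma_x$, i.e. $u_x a u_x^{-1} = \sigma_x(a)$, so I would set $(x \cdot f)(m) = \sigma_x\, f(\sigma_x^{-1} m)$. First I would verify that $x \cdot f$ is again $A$-linear, using the commutation rule $u_x^{-1} a = \sigma_x^{-1}(a)\, u_x^{-1}$ to push scalars through; then I would check that $f \mapsto x \cdot f$ is a genuine left action, the key point being that the cocycle twist $u_x u_y = \alpha(x,y)\, u_{xy}$ is harmless because conjugation by the unit $\alpha(x,y) \in A$ fixes every $A$-linear map. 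Finally, since $A^{\sigma}_{\alpha}G$ is generated as a ring by $A$ and the elements $u_x$, an $A$-linear map $f$ is $A^{\sigma}_{\alpha}G$-linear exactly when it commutes with every $u_x$, which is precisely the condition $x \cdot f = f$; this yields $\Hom_{A^{\sigma}_{\alpha}G}(M,N) = \Hom_A(M,N)^G$.

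The main obstacle will be the bookkeeping in part (3): because the $\sigma_x$ no longer multiply like group elements but satisfy $\sigma_x\sigma_y = \iota_{\alpha(x,y)}\sigma_{xy}$, one must track the $\alpha$-twist carefully both when verifying the $A$-linearity of $x \cdot f$ and when confirming that $f \mapsto x \cdot f$ is an honest $G$-action rather than a merely projective one. The resolution, namely that the inner automorphism induced by a unit of $A$ acts as the identity on every $A$-linear map, is exactly what makes the crossed-product case behave as cleanly as the skew group ring case.
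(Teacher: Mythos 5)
Your proofs of parts (1) and (2) are correct and essentially identical to the paper's: the paper also argues (1) directly from the generators of $\mathfrak{I}$ and proves (2) via evaluation at $\bar{1}$ with inverse $v \mapsto f_v$. The divergence is in part (3), where the paper gives no argument at all and simply cites part (c) of Theorem 1.4.8 of Marcus's book, whereas you supply a self-contained proof. Your argument there is sound: writing $u_x = 1_A\sigma_x$, these are units with $u_x a u_x^{-1} = \sigma_x(a)$ and $u_x u_y = \alpha(x,y)u_{xy}$, and the two delicate points you flag --- that $x\cdot f$ is again $A$-linear, and that the $\alpha$-twist does not obstruct the action axiom --- are both resolved exactly as you say, because conjugation by a unit of $A$ is absorbed by the $A$-linearity of $f$. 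This same observation also shows the action is independent of the choice of unit representative in $A\sigma_x$, which implicitly justifies interpreting $\Hom_A(M,N)^G$ via this canonical $G$-action (the sense intended by the paper's definition of $-^G$). The identification of the fixed points with $\Hom_{A^{\sigma}_{\alpha}G}(M,N)$ then follows since $A^{\sigma}_{\alpha}G$ is generated by $A$ and the $u_x$. What your route buys is a proof readable without access to Marcus's text; what the citation buys the paper is brevity. No gaps.
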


\begin{proof}
Note that $v \in M^G$ if and only if $\sigma_x v - v = 0$ for every $x \in G$, if and only if $(\sigma_x - 1) v = 0$ for all $x \in G$. But $\mathfrak{I}$ is generated by these elements as a left $A ^{\sigma} _{\alpha} G$ ideal. The first statement follows.

To show the second one, we define two maps:
\begin{equation*}
\varphi: \Hom _{A ^{\sigma} _{\alpha} G} (A ^{\sigma} _{\alpha} G / \mathfrak{I}, M) \to M^G, \quad f \to f(\bar{1});
\end{equation*}
and
\begin{equation*}
\psi: M^G \to \Hom _{A ^{\sigma} _{\alpha} G} (A ^{\sigma} _{\alpha} G / \mathfrak{I}, M) , \quad v \to f_v
\end{equation*}
such that  $f_v (\bar{1}) = v$. Because every $\sigma_x$ fixes $\bar{1}$, these two maps are well defined. They are inverse to each other, and functorial, and hence give a natural isomorphism between $-^G$ and $\Hom _{A ^{\sigma} _{\alpha} G } (A ^{\sigma} _{\alpha} G / \mathfrak{I}, -)$ (in the framework of $A^G$-modules).

The last statement can be deduced from part (c) of Theorem 1.4.8 in \cite{M}.
\end{proof}

In the rest of this section we give a uniform definition for the homological dimensions we consider in this paper. The reader is suggested to refer to \cite{AF} and \cite{HS} for definitions of homological dimensions on complexes. For an arbitrary associative left Noetherian ring $R$ with identity we let $\additive (R)$ be the additive category of finitely generated projective $R$-modules. Denote by $C^-(_RP)$ (resp., $C^b(_RP)$) the category of right bounded complexes (resp., bounded complexes) whose terms lie in $\additive (R)$. Because $R$ is left Noetherian, $R\module$ is an abelian category. In particular, every finitely generated $R$-module has a projective resolution contained in $C^- (_RP)$. Let $K^-(_RP)$ and $K^b(_RP)$ be their homotopy categories. Objects in $K^b (_RP)$ are called \textit{perfect complexes}. Note that the right bounded derived category $D^- (R)$ of finite generated $R$-modules is equivalent to $K^- (_RP)$ as triangulated categories, and the bounded derived category $D^b (R)$ is equivalent to the full subcategory of $K^- (_RP)$ consisting of objects with bounded homologies. Thus we can identify these categories.

Given $P^{\bullet} \in K^- (_RP)$, we define $s (P^{\bullet}) = \sup \{ i \in \mathbb{Z} \mid P^i \neq 0 \}$. Similarly, $i (P^{\bullet})$ is defined to be $\inf \{ i \in \mathbb{Z} \mid P^i \neq 0\}$. The \textit{amplitude} $a (P^{\bullet})$ equals $s (P^{\bullet}) - i (P^{\bullet})$. We then define the \textit{length} $l (P^{\bullet})$ to be $\inf \{a(Q^{\bullet}) \mid Q^{\bullet} \text{ is quasi-isomorphic to } P^{\bullet} \}$. The reader readily see that for $M \in R \module$ (view it as a stalk complex in $D^b(R)$ concentrated in degree 0 and identify it with its projective resolutions), $l (M)$ is nothing but the projective dimension of $M$. Therefore,
\begin{align*}
& \gldim R = \sup \{ l(P^{\bullet}) \mid P^{\bullet} \in K^- (_RP) \text{ and } H^i(P^{\bullet}) \neq 0 \text{ for at most one } i \in \mathbb{Z} \};\\
& \fdim R = \sup \{ l(P^{\bullet}) \mid P^{\bullet} \in K^b (_RP) \text{ and } H^i(P^{\bullet}) \neq 0  \text{ for at most one } i \in \mathbb{Z} \};\\
& \sgldim R = \sup \{ l(P^{\bullet}) \mid P^{\bullet} \in K^b (_RP) \text{ is indecomposable} \}.
\end{align*}

The concept strong global dimension was introduced by Ringel for finite dimensional algebras and he conjectured that a finite dimensional algebra has finite strong global dimension if and only if it is \textit{piecewise hereditary}; that is, its bounded derived module category is equivalent to the bounded derived category of a hereditary abelian category as triangulated categories. This conjecture was proved recently by Happel and Zacharia. For more details, see \cite{HZ1, HZ2}. Note that we always have $\sgldim R \geqslant \gldim R \geqslant \fdim R$. The second inequality is obvious, while the first one can be observed by taking truncations of projective resolutions of finitely generated $R$-modules.

Several interesting open questions are related to these homological dimensions. The famous finitistic dimension conjecture asserts that the finitistic dimension of an artinian algebra is always finite. For a finite dimensional hereditary algebra, its global dimension and strong global dimension coincide. But in general we do not know for what algebras this equality holds.

\section{Induction and Restriction}

In this section we consider two classical functors: induction and restriction. Many techniques and results stated in this section are well known for module categories, and our goal is to revisit them in derived categories. This slight generalization is essential for studying strong global dimension since it cannot be defined in module categories as global dimension or finitistic dimension.

Let $R$ be a left Noetherian ring and $S$ be a left Noetherian subring. We also suppose that $1_R = 1_S$ and $_SR$ is a finitely generated $S$-module. For $M \in S \module$, the \textit{induced module} is defined to be $R \otimes_S M$, which is finitely generated. For $N \in R \module$, the \textit{restricted module} is $_SN$, which is finitely generated as well since $_SR$ is finitely generated. In this way we get a pair of adjoint functors $\uparrow _S^R$ and $\downarrow _S^R$. That is, there is a natural isomorphism $\Hom_R (M \uparrow _S^R, N) \cong \Hom_S (M, N \downarrow _S^R)$.

According to \cite{HS}, $R$ is a \textit{separable extension} over $S$ if the multiplication epimorphism $R \otimes_S R \to R$ by sending $a \otimes b$ to $ab$ is split. In other words, there is a certain $\sum _{i = 1}^n  a_i \otimes b_i \in R \otimes_S R$ such that $\sum _{i = 1}^n  a_i \otimes b_i r = \sum _{i = 1}^n  r a_i \otimes b_i$ for every $r \in R$ and $\sum _{i=1}^n a_ib_i = 1$.

The following proposition is well known.

\begin{proposition}
Let $R$ and $S$ be as above and suppose that $_SR$ is a finitely generated $S$-module.
\begin{enumerate}
\item If $_SR_S = S \oplus B$, for every $M \in S \module$, $M$ is isomorphic to a direct summand of $M \uparrow _S^R \downarrow _S^R$.
\item If $R$ is a separable extension over $S$, then $N$ is isomorphic to a direct summand of $N \downarrow _S^R \uparrow _S^R$ for every $N \in R \module$.
\end{enumerate}
\end{proposition}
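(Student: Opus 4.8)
The plan is to handle the two parts independently, since each amounts to writing down an explicit section of a natural map and checking it is a morphism in the appropriate category together with a one-sided inverse. Both statements are standard, so the work is entirely in keeping the various module structures straight rather than in any genuine difficulty.

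For part (1), the first observation is that $M \uparrow_S^R \downarrow_S^R$ is, as a left $S$-module, just $R \otimes_S M$ equipped with the left $S$-action inherited from $_SR$. I would use the bimodule decomposition $_SR_S = S \oplus B$ to split the tensor product along the \emph{right} $S$-structure, giving $R \otimes_S M \cong (S \otimes_S M) \oplus (B \otimes_S M)$, and then identify $S \otimes_S M$ with $M$ via $s \otimes m \mapsto sm$. The point to verify is that this canonical isomorphism respects the \emph{left} $S$-action, the left action on the $S$-summand being ordinary left multiplication on $S$; this is exactly what the bimodule hypothesis guarantees. This realizes $M$ as a direct summand of $M \uparrow_S^R \downarrow_S^R$. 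The only thing demanding care here is that the tensor product is formed over the right $S$-action while the resulting restricted structure uses the left $S$-action, and one must check both decompose compatibly.

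For part (2), I would work with the multiplication map $\mu \colon N \downarrow_S^R \uparrow_S^R = R \otimes_S N \to N$, $r \otimes n \mapsto rn$, which is a surjective homomorphism of left $R$-modules. Fixing a \emph{separability element} $\sum_{i=1}^n a_i \otimes b_i \in R \otimes_S R$ with $\sum_i a_i \otimes b_i r = \sum_i r a_i \otimes b_i$ for all $r \in R$ and $\sum_i a_i b_i = 1$, I would define a candidate splitting $\nu \colon N \to R \otimes_S N$ by $\nu(n) = \sum_i a_i \otimes b_i n$. That $\mu \nu = \mathrm{id}_N$ is immediate from $\sum_i a_i b_i = 1$.

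The one step I would flag as non-formal is checking that $\nu$ is $R$-linear. The clean way is to apply the $S$-balanced map $R \otimes_S R \to R \otimes_S N$, $x \otimes y \mapsto x \otimes yn$, to both sides of the relation $\sum_i a_i \otimes b_i r = \sum_i r a_i \otimes b_i$: the left-hand side yields $\sum_i a_i \otimes b_i(rn) = \nu(rn)$ and the right-hand side yields $\sum_i r a_i \otimes b_i n = r\,\nu(n)$. Hence $\nu$ is a morphism in $R\module$ splitting $\mu$, which exhibits $N$ as a direct summand of $N \downarrow_S^R \uparrow_S^R$. I do not anticipate a real obstacle beyond this linearity computation; the content of the proposition is the translation of the two hypotheses (the bimodule splitting in (1) and the separability element in (2)) into the respective sections.
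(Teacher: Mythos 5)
Your proposal is correct and follows essentially the same route as the paper: part (1) uses the bimodule splitting $_SR_S = S \oplus B$ to split off $S \otimes_S M \cong M$ from $R \otimes_S M$ (the paper packages this as the split surjection $r \otimes v \mapsto \pi(r)v$ with section $v \mapsto 1 \otimes v$), and part (2) uses the separability element to define exactly the same section $n \mapsto \sum_i a_i \otimes b_i n$ of the multiplication map, with the same $R$-linearity check. No gaps.
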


\begin{proof}
Since $_SR_S = S \oplus B$, there is a split surjection $_SR_S \to S$, denoted by $\pi$. This gives rise to a split surjection $\pi_M: R \otimes_S M \to M$ by sending $r \otimes v \to \pi(r) v$ for $r \in R$ and $v \in M$. Its right inverse $\delta_M$ sends $v$ to $1 \otimes v$. That is, $\pi_M \circ \delta_M$ is the identity map on $M$. The first statement is proved.

Since $R$ is a separable extension over $S$, there is a certain $\sum _{i = 1}^n a_i \otimes b_i \in R \otimes_S R$ such that $\sum _{i = 1}^n  a_i \otimes b_i r = \sum _{i = 1}^n  r a_i \otimes b_i$ for every $r \in R$ and $\sum _{i=1}^n a_ib_i = 1$. For $N \in R \module$, define $\psi_N: R \otimes_S N \to N$ by sending $r \otimes v$ to $rv$ for $r \in R$ and $v \in N$, and define $\varphi_N: N \to R \otimes_S N$ by mapping $v$ to $\sum _{i = 1}^n a_i \otimes b_i v$. The first condition on separable extensions implies that $\varphi_N$ is an $R$-module homomorphism, and the second condition tells us that $\psi_N \circ \varphi_N$ is the identity map on $N$. The second statement follows immediately.
\end{proof}

A crucial observation is that the maps defined in the above proof lift to chain maps.

\begin{proposition}
Let $R$ and $S$ be as above and suppose that $_SR$ is a finitely generated projective $S$-module and $R_S$ is a finitely generated flat $S$-module.
\begin{enumerate}
\item The induction and restriction functors induce functors between $K^- (_SP)$ (resp., $K^b (_SP)$) and $K^- (_RP)$ (resp., $K^b (_RP)$), which are still denoted by $\uparrow _S^R$ and $\downarrow _S^R$.

\item If $_SR_S = S \oplus B$, for every $Q^{\bullet} \in K^- (_SP)$ (resp., $Q^{\bullet} \in K^b (_SP)$), $Q^{\bullet}$ is isomorphic to a direct summand of $Q^{\bullet} \uparrow_S^R \downarrow _S^R$.

\item If $R$ is a separable extension over $S$, then for every $P^{\bullet} \in K^- (_RP)$ (resp., $P^{\bullet} \in K^b (_RP)$), $P^{\bullet}$ is isomorphic to a direct summand of $P^{\bullet} \downarrow _S^R \uparrow _S^R$.
\end{enumerate}
\end{proposition}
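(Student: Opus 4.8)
The plan is to reduce all three assertions to the module-level Proposition 3.1, using the single formal principle that an additive functor acts degreewise on complexes, and that a natural transformation between additive functors, when evaluated on a complex, automatically commutes with the differentials and is therefore a chain map.

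For (1) I would first check termwise preservation of projectivity. Induction $R\otimes_S-$ sends a finitely generated projective $S$-module $P$, a summand of some $S^n$, to a summand of $R\otimes_S S^n\cong R^n$, hence to a finitely generated projective $R$-module; restriction sends a finitely generated projective $R$-module, a summand of some $R^n$, to a summand of $({}_SR)^n$, which is finitely generated projective over $S$ exactly because ${}_SR$ is assumed finitely generated projective. This is the operative hypothesis for (1); the flatness of $R_S$ is not needed for this termwise bookkeeping and enters elsewhere, e.g. in guaranteeing that induction carries projective resolutions to projective resolutions. As both functors are additive they carry chain maps to chain maps and chain homotopies to chain homotopies, and they obviously preserve right-boundedness and boundedness; hence they descend to functors between $K^-({}_SP)$ and $K^-({}_RP)$, and between $K^b({}_SP)$ and $K^b({}_RP)$.

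The content of (2) and (3) is the crucial observation announced just before the statement: the retractions constructed in the proof of Proposition 3.1 are \emph{natural} in the module variable. In (2), $\delta_M\colon M\to M\uparrow_S^R\downarrow_S^R$, $v\mapsto 1\otimes v$, and $\pi_M\colon M\uparrow_S^R\downarrow_S^R\to M$, $r\otimes v\mapsto \pi(r)v$, are natural transformations $\mathrm{Id}\Rightarrow\,\uparrow_S^R\downarrow_S^R$ and $\uparrow_S^R\downarrow_S^R\Rightarrow\mathrm{Id}$ on $S\module$; in (3), the maps $\varphi_N$ and $\psi_N$ built from the separable datum $\sum_i a_i\otimes b_i$ are natural transformations on $R\module$, the $R$-linearity of $\varphi_N$ being precisely the first separability condition. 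For a complex $Q^\bullet$ with differential $d^i$, the differential of $Q^\bullet\uparrow_S^R\downarrow_S^R$ is the functor applied to $d^i$, so the naturality square of $\delta$ at the morphism $d^i$ is exactly the chain-map identity for the family $\{\delta_{Q^i}\}$. Thus $\delta_{Q^\bullet}$ is a chain map, and the same argument applies verbatim to $\pi$, $\varphi$, and $\psi$.

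Finally I would invoke the retraction identities $\pi_M\circ\delta_M=\mathrm{id}_M$ and $\psi_N\circ\varphi_N=\mathrm{id}_N$ from Proposition 3.1. As these hold in each degree, the corresponding composites of chain maps are the identity chain maps on $Q^\bullet$ and on $P^\bullet$. Hence $Q^\bullet$ is a direct summand of $Q^\bullet\uparrow_S^R\downarrow_S^R$ and $P^\bullet$ is a direct summand of $P^\bullet\downarrow_S^R\uparrow_S^R$ already in the categories of complexes $C^-({}_SP)$ and $C^-({}_RP)$ (and in their bounded variants), and therefore in the homotopy categories $K^-$ and $K^b$. The only non-formal input is the naturality check of the preceding paragraph, which merely repeats the module-level computations of Proposition 3.1, so I expect no genuine obstacle; the single point requiring care is keeping track of which composite functor ($\uparrow_S^R\downarrow_S^R$ versus $\downarrow_S^R\uparrow_S^R$) and which side's linearity each natural map carries.
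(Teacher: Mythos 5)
Your proposal is correct and follows essentially the same route as the paper: apply the functors termwise (using that ${}_SR$ projective makes restriction preserve projectives and that induction sends projectives to projectives), and observe that the module-level retractions $\delta,\pi,\varphi,\psi$ of Proposition 3.1 commute with the differentials --- your naturality formulation is just the cleaner way of stating the paper's ``commute with differential maps'' check --- so the degreewise identities $\pi\circ\delta=\mathrm{id}$ and $\psi\circ\varphi=\mathrm{id}$ already split the complexes in $C^-$ and hence in $K^-$ and $K^b$. Your side remark that flatness of $R_S$ is not needed for the termwise bookkeeping in (1) is also accurate; the paper invokes it only to make induction exact, which matters later when quasi-isomorphisms are transported.
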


\begin{proof}
We only give a proof for bounded homotopy categories as it works for right bounded homotopy categories as well. Since $1_S = 1_R$, the restriction functor is exact. Moreover, since $_SR$ is a finitely generated projective $S$-module, $\downarrow_S^R$ preserves projective modules, too. Applying it termwise to a bounded chain complex of projective $R$-modules, we get a bounded chain complex of projective $S$-modules. Moreover, it sends morphisms (homotopy classes of chain maps) in $K^b (_RP)$ to morphisms in $K^b (_SP)$. In this way we get a functor $\downarrow _S^R: K^b (_RP) \to K^b (_SP)$.

Similarly, the induction functor preserves projective modules. Moreover, it is exact since $R_S$ is flat. Given $Q^{\bullet} \in K^b (_SP)$ with differentials $(d^i) _{i \in \mathbb{Z}}$, applying the induction functor termwise we get an object $P^{\bullet} \uparrow_S^R \in K^b (_RP)$ whose $i$-th term is $R \otimes _S P^i$ and $i$-th differential map is $1 \otimes d^i$. This is a chain complex of projective $R$-modules. Moreover, given a morphism $(f^i) _{i \in \mathbb{Z}}: X^{\bullet} \to Y^{\bullet}$ in $K^b (_SP)$, we can define a corresponding morphism $(1 \otimes f^i) _{i \in \mathbb{Z}}$. These constructions are functorial. In this way we lift the induction functor from $S \module$ to $K^b (_SP)$. The first statement is proved.

To prove statements (2) and (3), it suffices to observe that under the given assumptions the homomorphisms $\delta, \pi, \varphi, \psi$ constructed in the proof of the previous proposition commute with differential maps, and hence give rise to chain maps. Explicitly, given $Q^{\bullet} \in K^b (_SP)$, we have
\begin{equation*}
\xymatrix{ Q^{\bullet} \ar[rr] ^-{\delta^{\bullet} = (\delta^i) _{i \in \mathbb{Z}}} & & Q^{\bullet} \uparrow _S^R \downarrow _S^R \ar[rr] ^-{\pi^{\bullet} = (\pi^i) _{i \in \mathbb{Z}}} & & Q^{\bullet}}
\end{equation*}
such that $\pi^{\bullet} \circ \delta ^{\bullet}$ is the identity map. Similarly, given $P^{\bullet} \in K^b (_RP)$, we have
\begin{equation*}
\xymatrix{ P^{\bullet} \ar[rr] ^-{\varphi^{\bullet} = (\varphi^i) _{i \in \mathbb{Z}}} & & P^{\bullet} \downarrow _S^R \uparrow _S^R \ar[rr] ^-{\psi^{\bullet} = (\psi^i) _{i \in \mathbb{Z}}} & & P^{\bullet}}
\end{equation*}
such that $\psi^{\bullet} \circ \varphi ^{\bullet}$ is the identity map. The conclusion follows.
\end{proof}

This proposition immediately implies Theorem 1.1.

\begin{theorem}
Let $R$ and $S$ be as above, and suppose that $_SR$ is a finitely generated projective $S$-module and $R_S$ is flat. If $R$ is a separable extension over $R$, then $R$ and $S$ have the same global dimension, finitistic dimension, and strong global dimension.
\end{theorem}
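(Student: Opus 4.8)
The plan is to prove the three equalities $\gldim R = \gldim S$, $\fdim R = \fdim S$, and $\sgldim R = \sgldim S$ each as a pair of opposite inequalities, obtained from the two retract statements of the preceding propositions together with the fact that both $\uparrow_S^R$ and $\downarrow_S^R$ are exact and preserve projectives. The guiding principle is simple: an exact functor preserving projectives turns a projective resolution (or a bounded projective complex) into one of no greater length, so neither functor can increase any of the three invariants; the retract statements then force equality by sandwiching. I would prove $\gldim$ and $\fdim$ entirely at the level of modules and projective dimension, and reserve the complex-level Proposition 3.2 for $\sgldim$, which is not expressible through module categories.

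For the global and finitistic dimensions, note that restricting an $R$-projective resolution gives $\pd_S(N\downarrow_S^R)\le\pd_R N$ for $N\in R\module$, and inducing an $S$-projective resolution gives $\pd_R(M\uparrow_S^R)\le\pd_S M$ for $M\in S\module$. Combining these with the two retracts of Proposition 3.1 and the elementary fact that the projective dimension of a direct summand is at most that of the whole, one obtains
\[
\pd_R N\le\pd_R(N\downarrow_S^R\uparrow_S^R)\le\pd_S(N\downarrow_S^R)\le \pd_R N
\]
and, for $M\in S\module$,
\[
\pd_S M\le\pd_S(M\uparrow_S^R\downarrow_S^R)\le\pd_R(M\uparrow_S^R)\le \pd_S M .
\]
Taking suprema over all modules yields $\gldim R=\gldim S$, and restricting to modules of finite projective dimension (a property preserved in both directions by the displayed chains) yields $\fdim R=\fdim S$. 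Here the first chain uses only separability (Proposition 3.1(2)), while the second uses the bimodule splitting $_SR_S=S\oplus B$ required by Proposition 3.1(1); this splitting is automatic in the crossed-product applications, where $S=A\sigma_{1_G}$ is an $A$-bimodule summand of $R=\bigoplus_x A\sigma_x$.

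For the strong global dimension I would pass to the length $l$ and the characterization $\sgldim R=\sup\{l(P^\bullet)\mid P^\bullet\in K^b (_RP)\text{ indecomposable}\}$. Exactness and preservation of projectives imply that applying either functor termwise to a minimal-amplitude representative of a complex produces a quasi-isomorphic complex of no larger amplitude, so $l(P^\bullet\downarrow_S^R)\le l(P^\bullet)$ and $l(Q^\bullet\uparrow_S^R)\le l(Q^\bullet)$. Given an indecomposable $P^\bullet\in K^b (_RP)$, I would decompose $P^\bullet\downarrow_S^R=\bigoplus_j Y_j^\bullet$ into indecomposables in $K^b (_SP)$, so that $P^\bullet\downarrow_S^R\uparrow_S^R=\bigoplus_j Y_j^\bullet\uparrow_S^R$. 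By Proposition 3.2(3) the complex $P^\bullet$ is a direct summand of this sum, and indecomposability lets me match it to a single summand $Y_{j_0}^\bullet\uparrow_S^R$, whence
\[
l(P^\bullet)\le l(Y_{j_0}^\bullet\uparrow_S^R)\le l(Y_{j_0}^\bullet)\le\sgldim S .
\]
Taking the supremum gives $\sgldim R\le\sgldim S$, and the reverse inequality is symmetric via Proposition 3.2(2).

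The main obstacle is concentrated in the two steps of the last display that I glossed over: that $l$ does not increase under passage to a direct summand, and that an indecomposable $P^\bullet$ which is a retract of $\bigoplus_j Y_j^\bullet\uparrow_S^R$ is already a retract of one summand. Both are statements about splitting idempotents in $K^b (_RP)$ and about controlling the amplitude of the resulting summand, and they are genuinely delicate in the left Noetherian generality, because the length of a direct sum is sensitive to the relative positions of its summands (a sum of short complexes placed in far-apart degrees has large length), so one \emph{cannot} bound $l(P^\bullet\downarrow_S^R)$ by $\sgldim S$ before decomposing into indecomposables. For finite-dimensional (or semiperfect) algebras both points are clean, since every object of $K^b (_RP)$ has a minimal representative whose amplitude realizes $l$, minimal representatives are unique, and Krull--Schmidt holds. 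I would therefore isolate a lemma asserting retract-monotonicity of $l$ (equivalently, that the subcategory of objects with $l\le n$ is closed under summands), and reduce the remainder of the $\sgldim$ argument to it.
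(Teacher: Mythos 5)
Your treatment of the global and finitistic dimensions is correct and is in fact more elementary than the paper's: the paper runs a single complex-level argument in $K^-(_SP)$ for all three invariants (choosing a representative with $a(Q^{\bullet})=l(Q^{\bullet})$ and proving $a(Q^{\bullet}\uparrow_S^R)=l(Q^{\bullet}\uparrow_S^R)$ by a contradiction that plays the two functors against each other), whereas your sandwich $\pd_R N\le\pd_R(N\downarrow_S^R\uparrow_S^R)\le\pd_S(N\downarrow_S^R)\le\pd_R N$ stays entirely in module categories and yields the stronger pointwise statement $\pd_R N=\pd_S(N\downarrow_S^R)$. You are also right to flag that the inequality $\gldim S\le\gldim R$ needs the bimodule splitting $_SR_S=S\oplus B$ of Proposition 3.1(1): that hypothesis is not listed in the theorem and does not follow from separability in general, so it must either be added or observed to hold in the intended applications, as you do; the paper uses it silently in the same place.

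For the strong global dimension your plan is structurally the same as the paper's, and you have correctly located the two points where the argument is genuinely incomplete: retract-monotonicity of the length $l$, and the passage from ``indecomposable retract of $\bigoplus_j Y_j^{\bullet}\uparrow_S^R$'' to ``retract of a single $Y_{j_0}^{\bullet}\uparrow_S^R$.'' Neither is addressed in the paper: its step ``$Q^{\bullet}$ is a direct summand of $V^{\bullet}\downarrow_S^R$, hence $l(Q^{\bullet})\le a(V^{\bullet}\downarrow_S^R)$'' is exactly retract-monotonicity (harmless for $\gldim$, where one can pass to the single nonvanishing homology module, but not for $\sgldim$), and its claim that ``the same technique'' handles $\sgldim$ ignores that $Q^{\bullet}\uparrow_S^R$ need not be indecomposable, so $l(Q^{\bullet}\uparrow_S^R)\le\sgldim R$ is not automatic --- your observation that a sum of short complexes placed in far-apart degrees has large length shows the supremum over all objects of $K^b$ can strictly exceed the supremum over indecomposables. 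So your proposal has no gap that the paper's proof avoids; but be aware that your matching step requires indecomposables of $K^b(_SP)$ to have local endomorphism rings, and requires finite decompositions into indecomposables to exist at all. Both hold for finite-dimensional (or semiperfect) algebras but not for a general left Noetherian ring, so in the stated generality the $\sgldim$ assertion should be regarded as established only modulo the retract-monotonicity lemma you propose to isolate, together with a Krull--Schmidt-type hypothesis.
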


\begin{proof}
We prove the conclusion for global dimensions, and the same technique applies to other homological dimensions with very small modifications. Take an object $Q^{\bullet} \in K^- (_SP)$ with $H^i (Q^{\bullet}) \neq 0$ for at most one $i \in \mathbb{Z}$. Using quasi-isomorphisms, we can assume that the amplitude $a (Q^{\bullet})$ equals the length $l(Q^{\bullet})$.

By the above proposition, $Q^{\bullet}$ is isomorphic to a direct summand of $Q^{\bullet} \uparrow _S^R \downarrow _S^R$. We claim that $a (Q^{\bullet} \uparrow _S^R) = l (Q^{\bullet} \uparrow _S^R)$. If this is not true, then there is some $V ^{\bullet} \in K^- (_RP)$ quasi-isomorphic to $Q^{\bullet} \uparrow _S^R$ with
\begin{equation*}
a (V^{\bullet}) = l (V^{\bullet}) < a(Q^{\bullet} \uparrow _S^R) = a (Q^{\bullet}) = l (Q^{\bullet}).
\end{equation*}
Applying the restriction functor to $V^{\bullet}$, we conclude that $V^{\bullet} \downarrow _S^R$ is quasi-isomorphic to $Q^{\bullet} \uparrow _S^R \downarrow _S^R$. Consequently, $Q^{\bullet}$ is quasi-isomorphic to a direct summand of $V^{\bullet} \downarrow _S^R$, so we have:
\begin{equation*}
l (Q^{\bullet}) \leqslant a(V^{\bullet} \downarrow _S^R) = a (V^{\bullet}) = l (V^{\bullet}).
\end{equation*}
But these two inequalities contradict each other, so $a (Q^{\bullet} \uparrow _S^R) = l (Q^{\bullet} \uparrow _S^R)$ as claimed. In particular,
\begin{equation*}
l (Q^{\bullet}) = a (Q^{\bullet}) = a(Q^{\bullet} \uparrow _S^R) = l(Q^{\bullet} \uparrow _R^S) \leqslant \gldim R,
\end{equation*}
and hence $\gldim S \leqslant \gldim R$.

If $R$ is a separable extension over $S$, we can apply a similar reasoning (using (3) of the previous proposition) to every indecomposable object $P^{\bullet} \in K^- (_RP)$ with $H^i (P^{\bullet}) \neq 0$ for at most one $i \in \mathbb{Z}$ and deduce that $l(P^{\bullet}) \leqslant \gldim S$, and hence $\gldim R \leqslant \gldim S$. This forces $\gldim R = \gldim S$.
\end{proof}

We give a remark.

\begin{remark} \normalfont
The above proof implicitly implies that $\pd _R V \geqslant \pd _S V$ for arbitrary $V \in R \module$ since a projective resolution of $_RV$ restricted to $S$ give a projective resolution of $_SV$. Moreover, it is also true that $\gldim S \leqslant \gldim R$, $\fdim S \leqslant \fdim R$, and $\sgldim S \leqslant \sgldim R$. Equalities hold if $R$ is a separable extension over $S$.
\end{remark}

Now we apply the previous results to investigate homological dimensions of crossed products. Similar techniques have been used to explore other properties of crossed product in \cite{L}. Suppose that $G$ is a finite group and let $H \leqslant G$ be a subgroup. Then $\sigma: G \to \Aut (A)$ and $\alpha: G \times G \to U(A)$ restrict to $H$ and $H \times H$ respectively. Denote these restricted maps by $\sigma$ and $\alpha$ again. They define a crossed product $A _{\alpha} ^{\sigma} H$, which is a subring of $A _{\alpha} ^{\sigma} G$. It is well known that both $_{A _{\alpha} ^{\sigma} H} A _{\alpha} ^{\sigma} G$ and $A _{\alpha} ^{\sigma} G _{A _{\alpha} ^{\sigma} H}$ are finitely generated projective modules. Moreover, $_{A _{\alpha} ^{\sigma} H} A _{\alpha} ^{\sigma} G _{A _{\alpha} ^{\sigma} H} \cong A _{\alpha} ^{\sigma} H \oplus B$. For more details, see \cite{M}.

We want to show that $A _{\alpha} ^{\sigma} G$ is a separable extension over $A _{\alpha} ^{\sigma} H$ when the index $|G : H|$ is invertible in $A$. One approach is to consider the element
\begin{equation*}
\zeta = \frac{1} {|G:H|} \sum _{x \in G/H} \alpha (x, x^-) \sigma_x \otimes \sigma_{x^-} \in A _{\alpha} ^{\sigma} G \otimes _{A _{\alpha} ^{\sigma} H} A _{\alpha} ^{\sigma} G
\end{equation*}
and show that it satisfies the two conditions of separable extensions, where $x^-$ is the inverse of $x$. This approach is implicitly used in Proposition 3.3 (page 79) and Theorem 3.4 (page 81) of \cite{K}. For convenience of the reader, we give a detailed proof. We need the following technical lemma.

\begin{lemma}
For $x, y \in G$, let $x^- = x^{-1}$ and $y^- = y^{-1}$. Then:
\begin{equation}
\alpha (x, x^-)^{-1} \cdot \sigma_x (\alpha (x^-, y^-)) \cdot \alpha (y^-, yx)^{-1} = \sigma_{y^-} (\alpha (yx, x^-y^-)) ^{-1},
\end{equation}
and
\begin{equation}
\alpha (xy, y^-x^-) ^{-1} \cdot \sigma_{xy} (\alpha (y^-, x^-))^{-1} \cdot \alpha (xy, y^-) = \alpha (x, x^-)^{-1}.
\end{equation}
\end{lemma}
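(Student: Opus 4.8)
The plan is to deduce both identities directly from the defining cocycle relation of the parameter set, namely $\alpha(a,b)\,\alpha(ab,c) = \sigma_a(\alpha(b,c))\,\alpha(a,bc)$, used together with the normalization conventions $\alpha(1_G, y) = \alpha(x, 1_G) = 1_A$ fixed earlier. The guiding idea is to apply this relation to triples $(a,b,c)$ of group elements chosen so that one of the partial products $ab$, $bc$, or $abc$ collapses to $1_G$: when that happens, one of the four $\alpha$-factors normalizes to $1_A$ and the relation degenerates into precisely the two-term identity we need. Everything after that is a formal rearrangement in the (noncommutative) unit group $U(A)$, so the whole argument is a matter of selecting the right triples and keeping track of the order of factors.

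For $(3.2)$ I would apply the cocycle relation to the triple $(a,b,c) = (xy, y^-, x^-)$. Here $ab = xy\cdot y^- = x$, while $bc = y^-x^-$ and $abc = x x^- = 1_G$, so the relation reads
\[
\alpha(xy, y^-)\,\alpha(x, x^-) = \sigma_{xy}(\alpha(y^-, x^-))\,\alpha(xy, y^-x^-).
\]
Inverting both sides yields $\alpha(xy, y^-x^-)^{-1}\,\sigma_{xy}(\alpha(y^-, x^-))^{-1} = \alpha(x,x^-)^{-1}\,\alpha(xy,y^-)^{-1}$, and multiplying on the right by $\alpha(xy, y^-)$ cancels the adjacent pair $\alpha(xy,y^-)^{-1}\alpha(xy,y^-)$, leaving exactly $(3.2)$.

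The identity $(3.1)$ requires two passes. First I would feed the relation the triple $(a,b,c) = (x, x^-, y^-)$: since $ab = xx^- = 1_G$ and hence $\alpha(1_G, y^-) = 1_A$, it collapses to $\sigma_x(\alpha(x^-, y^-))\,\alpha(x, x^-y^-) = \alpha(x, x^-)$, giving $\alpha(x,x^-)^{-1}\,\sigma_x(\alpha(x^-, y^-)) = \alpha(x, x^-y^-)^{-1}$. Substituting this into the left-hand side of $(3.1)$ reduces it to $\alpha(x, x^-y^-)^{-1}\,\alpha(y^-, yx)^{-1}$. A second application, now to $(a,b,c) = (y^-, yx, x^-y^-)$ — where $bc = yx\cdot x^-y^- = 1_G$ forces $\alpha(y^-, 1_G) = 1_A$ and $ab = y^-\cdot yx = x$ — produces $\sigma_{y^-}(\alpha(yx, x^-y^-)) = \alpha(y^-, yx)\,\alpha(x, x^-y^-)$. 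Inverting this last equation gives $\sigma_{y^-}(\alpha(yx,x^-y^-))^{-1} = \alpha(x,x^-y^-)^{-1}\alpha(y^-,yx)^{-1}$, which matches the reduced left-hand side and establishes $(3.1)$.

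I expect no genuine obstacle here, only careful bookkeeping, and this is the one point that demands attention: because $A$ is noncommutative, the $\alpha$-values and their $\sigma$-twists need not commute, so every cancellation must be arranged between physically adjacent inverse pairs rather than licensed by reordering factors. The only other thing to verify at each step is the group arithmetic identifying which partial product equals $1_G$, since that is what authorizes discarding a normalized factor.
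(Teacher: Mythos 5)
Your proof is correct and follows essentially the same approach as the paper: both reduce the two identities to instances of the cocycle condition $\alpha(a,b)\,\alpha(ab,c)=\sigma_a(\alpha(b,c))\,\alpha(a,bc)$ (which the paper invokes in the equivalent form of associativity of products of the $\sigma$'s), using the same triples $(x,x^-,y^-)$ and $(xy,y^-,x^-)$. Your second step for (3.1) --- a single application at $(y^-,yx,x^-y^-)$ --- reaches the paper's intermediate identity (3.3) a bit more directly than the paper's expansion of a four-fold product, but the argument is the same in substance.
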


\begin{proof}
By considering the product $(\sigma_x \ast \sigma_{x^-}) \ast \sigma_{y^-} = \sigma_x \ast (\sigma_{x^-} \ast \sigma_{y^-})$, we get
\begin{equation*}
\alpha(x, x^-) = \sigma_x (\alpha (x^-, y^-)) \cdot \alpha (x, x^-y^-),
\end{equation*}
and hence
\begin{equation*}
\alpha(x, x^-) ^{-1} = \alpha (x, x^-y^-) ^{-1} \cdot \sigma_x (\alpha (x^-, y^-)) ^{-1}.
\end{equation*}
Applying this identity to the left side of (3.1), it suffices to show
\begin{equation*}
\alpha (x, x^-y^-) ^{-1} \cdot \alpha (y^-, yx)^{-1} = \sigma_{y^-} (\alpha (yx, x^-y^-)) ^{-1},
\end{equation*}
which is equivalent to
\begin{equation}
\alpha (y^-, yx) \cdot \alpha (x, x^-y^-) = \sigma_{y^-} (\alpha (yx, x^-y^-)).
\end{equation}

Now consider
\begin{equation*}
\sigma_{y^-} \ast ((\sigma_y \ast \sigma_x) \ast (\sigma_{x^-} \ast \sigma_{y^-})) = (\sigma_{y^-} \ast (\sigma_y \ast \sigma_x)) \ast (\sigma_{x^-} \ast \sigma_{y^-}).
\end{equation*}
On the left side we get
\begin{equation*}
\sigma_{y^-} (\alpha(y, x)) \cdot \alpha (y^-, yx) \cdot \sigma_x (\alpha (x^-, y^-)) \cdot \alpha (y^-, yx)^{-1} \cdot \sigma_{y^-} (\alpha (yx, y^-x^-)) \sigma_{y^-}.
\end{equation*}
On the right side, we have
\begin{equation*}
\sigma_{y^-} (\alpha(y, x)) \cdot \alpha (y^-, yx) \cdot \sigma_x (\alpha (x^-, y^-)) \cdot \alpha (x, x^-y^-) \sigma_{y^-}.
\end{equation*}
Therefore,
\begin{equation*}
\alpha (x, x^-y^-) = \alpha (y^-, yx) ^{-1} \cdot \sigma_{y^-} (\alpha (yx, y^-x^-)).
\end{equation*}
Multiplied by $\alpha (y^-, yx)$ on both sides, this identity turns out to be (3.3) as claimed.

The identity (3.2) is equivalent to
\begin{equation}
\sigma _{xy} (\alpha (y^-, x^-)) \cdot \alpha (xy, y^-x^-) = \alpha (xy, y^-) \cdot \alpha (x, x^-).
\end{equation}
To prove this, we consider $(\sigma_x \ast \sigma_y) \ast (\sigma_{y^-} \ast \sigma_{x^-}) = ((\sigma_x \ast \sigma_y) \ast \sigma_{y^-}) \ast \sigma_{x^-}$.
\end{proof}

\begin{proposition}
Let $G$ be a finite group. If $|G : H|$ is invertible in $A$, then $A _{\alpha} ^{\sigma} G$ is a separable extension over $A _{\alpha} ^{\sigma} H$.
\end{proposition}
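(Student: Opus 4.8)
The plan is to exhibit an explicit separability element and to check the two defining conditions of Section~3 by hand. Write $R = A_{\alpha}^{\sigma} G$ and $S = A_{\alpha}^{\sigma} H$, note that $A = A\sigma_{1_G} \subseteq S$, and fix once and for all a set of representatives for the left cosets $G/H$. Since $\sigma_x \ast \sigma_{x^-} = \alpha(x, x^-)\, \sigma_{1_G} = \alpha(x,x^-)$, the scalar forced by the normalization condition is $\alpha(x,x^-)^{-1}$, so I would take
\[
\zeta = \frac{1}{|G:H|} \sum_{x \in G/H} \alpha(x, x^-)^{-1}\, \sigma_x \otimes \sigma_{x^-} \in R \otimes_S R,
\]
the crossed-product analogue of the classical separability idempotent $\frac{1}{|G:H|}\sum_x x \otimes x^{-1}$ for group algebras. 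The proof then splits into three tasks: (i) $\zeta$ does not depend on the chosen representatives; (ii) $\mu(\zeta) = 1$, where $\mu$ is the multiplication map; and (iii) $\sum a_i \otimes b_i\, r = \sum r\, a_i \otimes b_i$ for every $r \in R$.

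Tasks (i) and (ii) are the routine ones. For (ii), each summand multiplies to $\alpha(x,x^-)^{-1}(\sigma_x \ast \sigma_{x^-}) = 1$, and summing the $|G:H|$ cosets against the factor $\frac{1}{|G:H|}$ returns the identity. For (i), I would replace a representative $x$ by $xh$ with $h \in H$, rewrite $\sigma_{xh} = \alpha(x,h)^{-1}(\sigma_x \ast \sigma_h)$ and $\sigma_{(xh)^-} = \sigma_{h^- x^-}$ likewise, and then slide the factors lying in $S$ (namely $\sigma_h$ and the various $\alpha$-values, all of which sit in $A \subseteq S$) across the tensor symbol; the cocycle relation~(2) for $(\sigma, \alpha)$ ensures that the accumulated scalars cancel and the summand is unchanged.

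Task (iii) is the heart of the matter. Because $R$ is generated as a ring by $A$ and $\{\sigma_y\}_{y \in G}$ and both sides are additive in $r$, it suffices to treat $r = a \in A$ and $r = \sigma_y$. For $r = a$, moving the $S$-scalar $\sigma_{x^-}(a)$ across $\otimes_S$ reduces the claim to the relation $\sigma_x(\sigma_{x^-}(a)) = \alpha(x,x^-)\,a\,\alpha(x,x^-)^{-1}$, which is exactly the inner-automorphism condition~(1). For $r = \sigma_y$, one meets $\sigma_{x^-} \ast \sigma_y = \alpha(x^-, y)\,\sigma_{x^- y}$ on one side and $\sigma_y \ast \sigma_x = \alpha(y,x)\,\sigma_{yx}$ on the other; I would reindex the sum by the permutation of $G/H$ induced by left multiplication by $y$ (writing $yx = z h$ with $z$ the representative of its coset and $h \in H$), slide the $S$-part $\sigma_h$ and its scalar across the tensor, and then verify that the two sides agree summand by summand. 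This last agreement reduces precisely to the scalar identities~(3.1) and~(3.2) of the preceding lemma, which is why those were isolated.

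The main obstacle is exactly this $\alpha$-cocycle bookkeeping in the case $r = \sigma_y$: keeping track of how $\alpha(x,x^-)$, $\alpha(x^-,y)$, $\alpha(y,x)$, and the reindexing scalar $\alpha(z,h)$ recombine as one both slides the $S$-part across $\otimes_S$ and relabels the coset sum. The lemma is engineered so that these products telescope into one another, and the delicate point is to confirm that the general identities~(3.1)--(3.2) specialize to exactly the scalars produced by each reindexed summand. As a sanity check I would first run the whole argument for a skew group ring ($\alpha \equiv 1$), where every twisting factor disappears and the computation collapses to the familiar group-algebra case.
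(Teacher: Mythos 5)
Your proposal is correct and takes essentially the same route as the paper: the same separability element (note the paper's displayed $\zeta$ carries a typo, writing $\alpha(x,x^-)$ where its own computations use $\alpha(x,x^-)^{-1}$, i.e.\ your normalization), the same three verifications, and the same use of the identities (3.1) and (3.2) of the preceding lemma for the coset reindexing and the independence of representatives. The only difference is cosmetic: you spell out the reduction to generators $r\in A$ and $r=\sigma_y$ and the case $r=a\in A$, which the paper dismisses with ``similarly.''
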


\begin{proof}
Let
\begin{equation*}
\zeta = \frac{1} {|G:H|} \sum _{x \in G/H} \alpha (x, x^-) \sigma_x \otimes \sigma_{x^-} \in A _{\alpha} ^{\sigma} G \otimes _{A _{\alpha} ^{\sigma} H} A _{\alpha} ^{\sigma} G.
\end{equation*}
We claim that this is well defined. That is, it is independent of the choice of representatives in cosets. Indeeds, for a fixed $x \in G$, take another representative $xy$ with $y \in H$. Then
\begin{align*}
& \alpha (xy, y^-x^-)^{-1} \sigma_{xy} \otimes \sigma_{y^-x^-} \\
& = \alpha (xy, y^-x^-)^{-1} \sigma_{xy} \otimes (\alpha(y^-, x^-)^{-1} \sigma_{y^-} \ast \sigma_{x^-}) \\
& = \alpha (xy, y^-x^-)^{-1} (\sigma_{xy} \ast \alpha(y^-, x^-)^{-1} \sigma_{y^-}) \otimes \sigma_{x^-} \\
& = \alpha (xy, y^-x^-)^{-1} \cdot \sigma_{xy} (\alpha(y^-, x^-))^{-1} \cdot \alpha (xy, y^-) \sigma_x \otimes \sigma_{x^-} \\
& = \alpha (x, x^-)^{-1} \sigma_x \otimes \sigma_{x^-} ,
\end{align*}
where the last identity comes from (3.2). Therefore, $\zeta$ is well defined.

For $y^{-1} \in G$, we check:
\begin{align*}
& |G:H| \zeta \sigma_{y^-} = \sum _{x \in G/H} \alpha (x, x^-)^{-1} \sigma_x \otimes \sigma_{x^-} \sigma_{y^-} \\
& = \sum _{x \in G/H} (\alpha (x, x^-)^{-1} \sigma_x) \ast \alpha(x^-, y^-) \otimes \sigma_{x^-y^-} \\
& = \sum _{x \in G/H} \alpha (x, x^-)^{-1} \cdot \sigma_x (\alpha(x^-, y^-)) \sigma_x \otimes \sigma_{x^-y^-} \\
& = \sum _{x \in G/H} \alpha (x, x^-)^{-1} \cdot \sigma_x (\alpha(x^-, y^-)) \cdot \alpha (y^-, yx)^{-1} (\sigma_{y^-} \sigma_{yx}) \otimes \sigma_{x^-y^-} \\
& = \sum _{x \in G/H} \sigma_{y^-} (\alpha (yx, x^-y^-))^{-1} (\sigma_{y^-} \sigma_{yx}) \otimes \sigma_{x^-y^-} \quad \text{ by (3.1)} \\
& = \sum _{x \in G/H} \sigma_{y^-} \big{(} \alpha (yx, x^-y^-)^{-1} \sigma_{yx} \otimes \sigma_{x^-y^-} \big{)}\\
& = |G:H| \sigma_{y^-} \zeta.
\end{align*}
That is, $\zeta \sigma_{y^-} = \sigma_{y^-} \zeta$. Similarly, we check that $\zeta a = a \zeta$ for $a \in A$. Therefore, $\zeta$ satisfies the first condition of separable extensions. The second condition is obvious.
\end{proof}

We establish the following result on homological dimensions of crossed products.

\begin{corollary}
Let $H$ be a subgroup of a finite group $G$. If $|G : H|$ is invertible in $A$, then $A _{\alpha} ^{\sigma} G$ and $A _{\alpha} ^{\sigma} H$ have the same global dimension, finitistic dimension, and strong global dimension.
\end{corollary}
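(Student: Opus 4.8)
The plan is to combine the separability result just established (the preceding Proposition, which shows that $A_{\alpha}^{\sigma} G$ is a separable extension over $A_{\alpha}^{\sigma} H$ whenever $|G:H|$ is invertible in $A$) with Theorem 1.1 (equivalently, the Remark following it). To invoke Theorem 1.1 with $R = A_{\alpha}^{\sigma} G$ and $S = A_{\alpha}^{\sigma} H$, I must verify its hypotheses: both rings are left Noetherian, they share the same identity, $_S R$ is a finitely generated projective $S$-module, and $R_S$ is flat. Once these are in place, Theorem 1.1 directly yields that $R$ and $S$ have the same global, finitistic, and strong global dimension, which is exactly the assertion of the corollary.

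First I would record that $A_{\alpha}^{\sigma} H$ is left Noetherian: since $A$ is left Noetherian and $A_{\alpha}^{\sigma} H$ is a free $A$-module of finite rank $|H|$, standard arguments show it inherits the Noetherian property; the same applies to $A_{\alpha}^{\sigma} G$. The shared identity is guaranteed by the normalization conventions fixed in Section 2 (namely $1_A \sigma_{1_G}$ is the identity of every crossed product under consideration). For the module-theoretic hypotheses, I would cite the fact recalled earlier in this section, attributed to Marcus \cite{M}, that $_{A_{\alpha}^{\sigma} H} A_{\alpha}^{\sigma} G$ and $A_{\alpha}^{\sigma} G {}_{A_{\alpha}^{\sigma} H}$ are finitely generated projective modules, and moreover $_{A_{\alpha}^{\sigma} H} A_{\alpha}^{\sigma} G {}_{A_{\alpha}^{\sigma} H} \cong A_{\alpha}^{\sigma} H \oplus B$. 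In particular $_S R$ is finitely generated projective, and $R_S$ is projective, hence flat, so every hypothesis of Theorem 1.1 is met.

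With these verifications done, the separability provided by the preceding Proposition is precisely the extra input Theorem 1.1 requires to upgrade the inequalities $\gldim S \leqslant \gldim R$, $\fdim S \leqslant \fdim R$, $\sgldim S \leqslant \sgldim R$ (noted in the Remark) to equalities. Thus I would simply apply Theorem 1.1 to conclude that
\begin{equation*}
\gldim A_{\alpha}^{\sigma} G = \gldim A_{\alpha}^{\sigma} H, \quad \fdim A_{\alpha}^{\sigma} G = \fdim A_{\alpha}^{\sigma} H, \quad \sgldim A_{\alpha}^{\sigma} G = \sgldim A_{\alpha}^{\sigma} H.
\end{equation*}

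I do not expect any genuine obstacle here, since the corollary is a formal consequence of assembling results already proved. The only point requiring mild care is checking that all four structural hypotheses of Theorem 1.1 hold for this specific pair of rings; of these, the left Noetherian property of the crossed products is the one that is not immediately quoted verbatim from the excerpt, so I would spend a sentence justifying it via the finite-rank freeness over $A$, while the projectivity and flatness conditions are handed to me directly by the cited structural facts from \cite{M}.
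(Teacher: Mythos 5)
Your proposal is correct and is exactly the paper's argument: the paper's proof reads ``Follows from Theorem 3.3 and Proposition 3.6,'' i.e., it combines the separability of $A_{\alpha}^{\sigma}G$ over $A_{\alpha}^{\sigma}H$ with Theorem 1.1, whose structural hypotheses are supplied by the facts from \cite{M} quoted just before Lemma 3.5. Your extra care in verifying the left Noetherian property and the projectivity/flatness conditions is a sound (if implicit in the paper) elaboration of the same route.
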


\begin{proof}
Follows from Theorem 3.3 and Proposition 3.6.
\end{proof}

\section{Classify global dimensions}

As before, let $A$ be a left Noetherian associative ring, $G$ be a finite group, and $\mathfrak{I}$ be the left $A ^{\sigma} _{\alpha} G$ ideal generated by all elements in $\{ \sigma_x - 1 \mid 1_G \neq x \in G\}$. Recall that the trivial representation is $A ^{\sigma} _{\alpha} G / \mathfrak{I}$.

\begin{proposition}
If the trivial representation $A ^{\sigma} _{\alpha} G / \mathfrak{I}$ is projective, then an $A ^{\sigma} _{\alpha} G$-module $M$ is projective if and only if the restricted module $_AM$ is projective. In this situation, $A ^{\sigma} _{\alpha} H $ and $A$ have the same global dimension for every subgroup $H \leqslant G$.
\end{proposition}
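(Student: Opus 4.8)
The plan is to establish the projectivity criterion (the first sentence) by a cohomological comparison of $\Ext$ over $R := A^{\sigma}_{\alpha}G$ and over $A$, and then to deduce the statement about subgroups by a squeezing argument using the monotonicity inequalities already recorded in the Remark following Theorem 3.3. Write $T = R / \mathfrak{I}$ for the trivial representation. One direction of the criterion is free of any hypothesis: since $_AR = \bigoplus_{x \in G} A\sigma_x$ is free of finite rank, the restriction functor $\downarrow_A^R$ carries finitely generated projective $R$-modules to projective $A$-modules (as noted in the proof of Proposition 3.2), so $M$ projective forces $_AM$ projective.

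For the converse I would exploit that projectivity of $T$ makes the fixed-point functor exact. By Proposition 2.2(2) we have $-^G \cong \Hom_R(T, -)$, so if $T$ is projective then $-^G$ is an exact functor on $R\module$. Now fix $M$ with $_AM$ projective and an arbitrary $N \in R\module$, and choose an $R$-projective resolution $P_{\bullet} \to M$. Since restriction is exact and preserves projectives, $P_{\bullet}\downarrow_A^R \to M\downarrow_A^R$ is an $A$-projective resolution. Applying Proposition 2.2(3) termwise gives an isomorphism of complexes $\Hom_R(P_{\bullet}, N) \cong \Hom_A(P_{\bullet}\downarrow_A^R, N\downarrow_A^R)^G$, whose target is the $G$-invariants of a complex of $R$-modules (the $G$-action being the conjugation action underlying Proposition 2.2). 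Taking cohomology and commuting the exact functor $-^G$ past $H^q$ yields
\[
\Ext^q_R(M, N) \cong \Ext^q_A(M\downarrow_A^R, N\downarrow_A^R)^G \qquad (q \geqslant 0).
\]
Because $_AM = M\downarrow_A^R$ is projective, the right-hand side vanishes for every $q \geqslant 1$ and every $N$, whence $M$ is a projective $R$-module. This proves the criterion. Using a projective resolution of $M$ (rather than an injective resolution of $N$) keeps everything inside the category of finitely generated modules, sidestepping the possible absence of enough injectives over a general left Noetherian ring.

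For the statement about subgroups, I would first read off from the displayed isomorphism, applied to $G$ itself, that $\Ext^q_R(-,-)$ vanishes whenever $q > \gldim A$, so that $\gldim A^{\sigma}_{\alpha}G \leqslant \gldim A$. Then I would invoke the inequality $\gldim S \leqslant \gldim R$ from Remark 3.4, applied to the two extensions $A \subseteq A^{\sigma}_{\alpha}H$ and $A^{\sigma}_{\alpha}H \subseteq A^{\sigma}_{\alpha}G$; both satisfy the standing hypotheses, since $_SR$ is finitely generated projective, $R_S$ is flat, and $_SR_S \cong S \oplus B$ in each case (recorded in Section 3 for the pair $A^{\sigma}_{\alpha}H \subseteq A^{\sigma}_{\alpha}G$, and immediate for $A \subseteq A^{\sigma}_{\alpha}H$). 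This produces the chain $\gldim A \leqslant \gldim A^{\sigma}_{\alpha}H \leqslant \gldim A^{\sigma}_{\alpha}G \leqslant \gldim A$, forcing all terms to be equal, which is the desired conclusion for every subgroup $H \leqslant G$. Note this route does not require knowing that the trivial $A^{\sigma}_{\alpha}H$-module is projective.

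The step I expect to be delicate is the cohomological comparison. One must verify that the conjugation $G$-action is compatible with the differentials of $\Hom_A(P_{\bullet}\downarrow_A^R, N\downarrow_A^R)$, so that its cohomology is genuinely a sequence of $R$-modules on which $-^G$ operates, and that the natural isomorphism of Proposition 2.2(3) can be applied coherently to all terms of the resolution at once; the exactness of $-^G$ furnished by projectivity of $T$ is exactly what permits interchanging invariants with cohomology. A more computational alternative would be to construct a splitting of the multiplication map $R \otimes_A M \to M$ directly from a section $T \hookrightarrow R$ (equivalently an element $e \in R$ fixed by all $\sigma_x$ with $e \equiv 1 \bmod \mathfrak{I}$), in the spirit of the averaging in Proposition 3.6, but the required $\alpha$-twisted identities make this messier and, for noncommutative $A$, do not obviously reduce to separability, so I would favor the cohomological argument above.
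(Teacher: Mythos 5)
Your proposal is correct and follows essentially the same route as the paper: both hinge on Proposition 2.2, namely that $\Hom_{A^{\sigma}_{\alpha}G}(M,-) \cong \Hom_A(M,-)^G$ and that projectivity of the trivial representation makes $-^G \cong \Hom_{A^{\sigma}_{\alpha}G}(A^{\sigma}_{\alpha}G/\mathfrak{I},-)$ exact, and both conclude with the squeeze $\gldim A \leqslant \gldim A^{\sigma}_{\alpha}H \leqslant \gldim A^{\sigma}_{\alpha}G \leqslant \gldim A$ from Remark 3.4. The only cosmetic difference is that you run the argument at the level of $\Ext$ groups via the isomorphism $\Ext^q_R(M,N) \cong \Ext^q_A(M,N)^G$, whereas the paper simply observes that $\Hom_R(M,-)$ is a composite of two exact functors and, for the global dimension bound, restricts a projective resolution and applies the projectivity criterion to the $s$-th syzygy.
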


\begin{proof}
Obviously, $_AM$ is projective if so is $M$. Conversely, assume that $_AM$ is projective. Note that $M$ is projective if and only if $\Hom _{A ^{\sigma} _{\alpha} G} (M, -)$ is exact. However, by Proposition 2.2, $\Hom _{A ^{\sigma} _{\alpha} G} (M, -) \cong \Hom_A (M, -)^G$ is the composite of two exact functors $\Hom_A (M, -)$ and $-^G \cong \Hom _{A ^{\sigma} _{\alpha} G} (A ^{\sigma} _{\alpha} G / \mathfrak{I}, -)$, and hence is exact. The first statement is verified.

Note that we always have $\gldim A ^{\sigma} _{\alpha} G \geqslant A$, so it suffices to show $\gldim A \geqslant \gldim A ^{\sigma} _{\alpha} G$ under the assumption. This is true if $\gldim A = \infty$, so we assume that $\gldim A = s < \infty$.

For an arbitrary $M \in A ^{\sigma} _{\alpha} G \module$, choosing a projective resolution $P^{\bullet} \in K^- (_RP)$ and applying the restriction functor $\downarrow ^G_1$, we get a projective resolution $_AP^{\bullet}$ for $_AM$. As $\pd_A M \leqslant s$, $_A K_s$ is a projective $A$-module, where $K_s$ is the $s$-th syzygy of $P^{\bullet}$. Therefore, $K_s$ is projective as an $A ^{\sigma} _{\alpha} G$-module. In other words, $\pd _{A ^{\sigma} _{\alpha} G} M \leqslant s$, so $\gldim A ^{\sigma} _{\alpha} G \leqslant s = \gldim A$. But for every subgroup $H \leqslant G$, we always have $\gldim A ^{\sigma} _{\alpha} G \geqslant \gldim A ^{\sigma} _{\alpha} H \geqslant \gldim A$. This forces $\gldim A ^{\sigma} _{\alpha} H  = \gldim A$.
\end{proof}

Unfortunately, the structure of $\mathfrak{I}$, and hence that of the trivial representation are hard to exploit for general crossed products. In contrast, the trivial representation of a skew group ring has a very explicit description. In this situation, $\mathfrak{I}$ consists of elements $\sum _{x \in G} a_x \sigma_x$ with $\sum _{x \in G} a_x = 0$, and the skew group ring $A ^{\sigma} G$ acts on $A$ by $(\sum _{x \in G} a_x \sigma_x) \cdot a = \sum _{x \in G} a_x \sigma_x (a)$. More details can be found in \cite{ARS}.

For every subgroup $H \leqslant G$, we define a trace map $\trace: A \to A^H$ by sending $a \in A$ to $\sum _{x \in H} \sigma_x (a)$. This is an $A^H$-linear map. Conclusions in the next proposition can be found in various literatures; see \cite{A, AY, ARS}.

\begin{proposition}
The following are equivalent for a skew group ring $A^{\sigma} G$:
\begin{enumerate}
\item the trivial $A ^{\sigma} G$-module $A$ is projective;
\item $M \in A ^{\sigma} G \module$ is projective if and only if the restricted module $_AM$ is projective;
\item the trace map $\trace: A \to A^G$ is surjective;
\item there is a certain element $a \in A$ such that $\trace (a) = 1$.
\end{enumerate}
When $A$ is commutative, they are equivalent to the following condition: $A ^{\sigma} G$ is a separable extension over $A$.
\end{proposition}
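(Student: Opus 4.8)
The plan is to establish the equivalences (1)$\Leftrightarrow$(2), (3)$\Leftrightarrow$(4), and (1)$\Leftrightarrow$(4), and then to handle the commutative case on its own. The implication (1)$\Rightarrow$(2) is exactly the first assertion of Proposition 4.1 with $H = G$; conversely, applying (2) to the trivial module $M = A$, whose restriction $_AA$ is free of rank one and hence projective, forces $A$ to be a projective $A ^{\sigma} G$-module. For (3)$\Leftrightarrow$(4), since $\sigma_x(1_A) = 1_A$ for every $x$ we have $1_A \in A^G$, so surjectivity of $\trace$ yields an $a$ with $\trace(a) = 1$; conversely $\trace$ is left $A^G$-linear (because $\sigma_x(cb) = c\,\sigma_x(b)$ when $c \in A^G$), so $\trace(a) = 1$ gives $\trace(ca) = c$ for all $c \in A^G$, whence $\trace$ is onto.

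The core of the argument is (1)$\Leftrightarrow$(4). I would observe that $A = A ^{\sigma} G / \mathfrak{I}$ is projective if and only if the quotient map $\pi \colon A ^{\sigma} G \to A$, $\sum_x a_x \sigma_x \mapsto \sum_x a_x$, admits an $A ^{\sigma} G$-linear section. By Proposition 2.2(2) (applied to the regular module) such sections are identified with the $G$-invariant elements $w \in (A ^{\sigma} G)^G$ satisfying $\pi(w) = 1$. A short coefficient comparison in the relations $\sigma_y w = w$ shows that the invariant elements are precisely those of the form $w = \sum_{x \in G} \sigma_x(a)\sigma_x$ with $a \in A$, and for these $\pi(w) = \trace(a)$. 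Hence a section exists exactly when $\trace(a) = 1$ for some $a$, which is (4).

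For commutative $A$, I would show separability is equivalent to the conditions above. The direction ``separable $\Rightarrow$ (1)'' needs no commutativity: by Proposition 3.1(2) applied to $N = A$, the trivial module is a direct summand of $A \downarrow_A^{A ^{\sigma} G} \uparrow_A^{A ^{\sigma} G} \cong A ^{\sigma} G$, which is free, and is therefore projective. For ``(4) $\Rightarrow$ separable'' I would exhibit the separating element
\[
\zeta = \sum_{x \in G} \sigma_x(a)\sigma_x \otimes \sigma_{x^{-1}} \in A ^{\sigma} G \otimes_A A ^{\sigma} G, \qquad \trace(a) = 1,
\]
which recovers the element of Proposition 3.6 (for $H = 1$) when $|G|$ is invertible in $A$ and $a = |G|^{-1}$. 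The identity $\sum u_i v_i = 1$ reduces at once to $\sum_x \sigma_x(a) = \trace(a) = 1$, and centrality $r\zeta = \zeta r$ need only be checked on the ring generators $\sigma_y$ and $b \in A$.

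I expect the main obstacle to be precisely this last verification. The case $r = \sigma_y$ goes through by reindexing the sum and using $\sigma_y\sigma_x = \sigma_{yx}$, but the case $r = b \in A$ produces $\zeta b = \sum_x \sigma_x(a)\, b\, \sigma_x \otimes \sigma_{x^{-1}}$ against $b\zeta = \sum_x b\, \sigma_x(a)\, \sigma_x \otimes \sigma_{x^{-1}}$, so equality holds exactly because $A$ is commutative. Confirming that this is the single point at which commutativity enters — and that every preceding step is commutativity-free — is the delicate bookkeeping that pins down the statement.
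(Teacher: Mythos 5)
Your proposal is correct, and its logical skeleton --- (1)$\Leftrightarrow$(2) via Proposition 4.1, (3)$\Leftrightarrow$(4) via $A^G$-linearity of the trace, (1)$\Leftrightarrow$(4) as the pivot, and the separability equivalence routed through (4) --- is exactly the paper's. The difference is that the paper disposes of the two substantive steps by citation ((1)$\Leftrightarrow$(4) is attributed to Proposition 4.1 on page 87 of \cite{ARS}, and the commutative/separable equivalence to Proposition 2.1 of \cite{NVV}), whereas you prove them directly, and both of your arguments check out. Comparing coefficients of $\sigma_y$ in $\sigma_y w = w$ does give $a_y = \sigma_y(a_{1_G})$, so the $G$-invariants of the regular module are exactly the elements $\sum_{x}\sigma_x(a)\sigma_x$, which map to $\trace(a)$ under $A^{\sigma}G \to A$; hence Proposition 2.2(2) identifies splittings of this surjection with solutions of $\trace(a)=1$. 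Your element $\zeta = \sum_x \sigma_x(a)\sigma_x\otimes\sigma_{x^{-1}}$ is the standard separability element (reducing to the $\zeta$ of Proposition 3.6 with $H=1$ and $a=|G|^{-1}$), and commutativity of $A$ enters only in the comparison of $b\zeta$ with $\zeta b$ for $b\in A$, exactly as you say; checking centrality only on the generators $\sigma_y$ and $b\in A$ is legitimate because the set of $r$ with $r\zeta=\zeta r$ is a subring. What your self-contained version buys is a precise localization of where commutativity is needed, which is consistent with the paper's Example 4.5 exhibiting a noncommutative $A$ for which (1)--(4) hold but $A^{\sigma}G$ is not separable over $A$.
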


\begin{proof}
We just proved that (1) implies (2) for crossed products, so it is also true for skew group rings. It is obvious that (2) implies (1). The equivalence between (3) and (4) is straightforward since the trace map is $A^G$-linear. Equivalence between (1) and (4) is precisely Proposition 4.1 in page 87 of \cite{ARS} by observing that the proof there actually works for arbitrary left Noetherian rings. For commutative $R$, the equivalence of (4) and the last condition is implied by Proposition 2.1 in \cite{NVV}.
\end{proof}

This propositions tells us that the projective dimension of the trivial representation plays an important role in determining the homological dimensions of skew group rings. The following theorem classifies global dimensions and finitistic dimensions for crossed products.

\begin{theorem}
Let $A ^{\sigma} _{\alpha} G$ be a crossed product, where $A$ is left Noetherian and $G$ is a finite group.
\begin{enumerate}
\item For every $M \in A ^{\sigma} _{\alpha} G \module$, $\pd _{A ^{\sigma} _{\alpha} G} M$ is either infinity or equal to $\pd_A M$ (when $\pd_A M < \infty$).
\item Correspondingly, $\gldim A ^{\sigma} _{\alpha} G$ is either infinity or equal to $\gldim A$ (when $\gldim A < \infty$), and $\fdim A ^{\sigma} _{\alpha} G = \fdim A$.
\item If the trivial representation $A ^{\sigma} _{\alpha} G /\mathfrak{I}$ is projective when viewed as an $A$-module, then $\gldim A ^{\sigma} _{\alpha} G$ is finite if and only so is $\gldim A$ and $A ^{\sigma} _{\alpha} G /\mathfrak{I}$ is a projective $A ^{\sigma} _{\alpha} G$-module.
\end{enumerate}
\end{theorem}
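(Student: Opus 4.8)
The plan is to reduce all three parts to one key lemma: \emph{if $K \in A^{\sigma}_{\alpha} G\module$ has projective restriction $_AK$ and $\pd_{A^{\sigma}_{\alpha} G} K < \infty$, then $K$ is projective over $A^{\sigma}_{\alpha} G$.} Granting this, part (1) follows from a syzygy argument. Since $_A(A^{\sigma}_{\alpha} G)$ is free, restriction is exact and preserves projectives, so restricting a projective resolution of $M$ gives $\pd_A M \le \pd_{A^{\sigma}_{\alpha} G} M$ (the remark after Theorem 3.3). Now suppose $n := \pd_{A^{\sigma}_{\alpha} G} M < \infty$ and put $m := \pd_A M \le n$. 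Let $K = \Omega^m M$ be the $m$-th syzygy of $M$ over $A^{\sigma}_{\alpha} G$; restricting to $A$ identifies $_AK$ with the $m$-th $A$-syzygy of $_AM$, which is projective because $\pd_A M = m$. As $\pd_{A^{\sigma}_{\alpha} G} K = n-m < \infty$, the lemma forces $K$ to be projective, whence $n = m = \pd_A M$. The finiteness hypothesis is essential: for $A=k$ and $A^{\sigma}_{\alpha} G = kG$ in characteristic dividing $|G|$ the trivial module has $\pd_A = 0$ but infinite projective dimension over $kG$, so no inequality $\pd_{A^{\sigma}_{\alpha} G} M \le \pd_A M$ can hold unconditionally.

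For the lemma, the strategy is to first establish $\Ext^i_{A^{\sigma}_{\alpha} G}(K, A^{\sigma}_{\alpha} G) = 0$ for all $i \ge 1$, then run a dimension shift. Because $_A(A^{\sigma}_{\alpha} G)$ is finitely generated projective, the coinduction functor $\Hom_A(A^{\sigma}_{\alpha} G, -)$ is exact and right adjoint to the exact restriction functor, giving an Eckmann--Shapiro isomorphism $\Ext^i_{A^{\sigma}_{\alpha} G}(K, \Hom_A(A^{\sigma}_{\alpha} G, A)) \cong \Ext^i_A(_AK, A)$, and the right-hand side vanishes for $i \ge 1$ since $_AK$ is projective. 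Crossed products of finite groups are Frobenius extensions (see \cite{K, M}), so $\Hom_A(A^{\sigma}_{\alpha} G, A) \cong A^{\sigma}_{\alpha} G$ as left modules; hence $\Ext^i_{A^{\sigma}_{\alpha} G}(K, A^{\sigma}_{\alpha} G) = 0$, and passing to direct summands, $\Ext^i_{A^{\sigma}_{\alpha} G}(K, Q) = 0$ for every finitely generated projective $Q$ and every $i \ge 1$. If $d := \pd_{A^{\sigma}_{\alpha} G} K$ were at least $1$, then $\Omega^d K$ is projective, and dimension shifting gives $\Ext^1_{A^{\sigma}_{\alpha} G}(\Omega^{d-1}K, \Omega^d K) \cong \Ext^d_{A^{\sigma}_{\alpha} G}(K, \Omega^d K) = 0$, so the short exact sequence $0 \to \Omega^d K \to P_{d-1} \to \Omega^{d-1}K \to 0$ splits and $\Omega^{d-1}K$ is projective, contradicting $d = \pd_{A^{\sigma}_{\alpha} G} K$. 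Thus $d = 0$. I expect the Frobenius/Eckmann--Shapiro step to be the main obstacle, as it is precisely where the crossed-product structure (not merely the two-sided projectivity of $A^{\sigma}_{\alpha} G$ over $A$) must be used; in particular $K$ need not be a direct summand of any induced module, so the separability arguments of Section 3 are unavailable here.

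Parts (2) and (3) then follow formally. For the global dimension, one always has $\gldim A \le \gldim A^{\sigma}_{\alpha} G$, and if $\gldim A^{\sigma}_{\alpha} G < \infty$ then every $\pd_{A^{\sigma}_{\alpha} G} M$ is finite and equals $\pd_A M \le \gldim A$ by part (1), forcing equality; hence $\gldim A^{\sigma}_{\alpha} G$ is either infinite or equal to $\gldim A$. For the finitistic dimension, part (1) gives $\fdim A^{\sigma}_{\alpha} G \le \fdim A$ at once; for the reverse, given $N \in A\module$ with $\pd_A N < \infty$, the induced module $A^{\sigma}_{\alpha} G \otimes_A N$ has finite projective dimension over $A^{\sigma}_{\alpha} G$, and since $N$ is a direct summand of its restriction by Proposition 3.1(1) (using $_A(A^{\sigma}_{\alpha} G)_A = A \oplus B$), part (1) yields $\pd_A N \le \pd_A\, {}_A(A^{\sigma}_{\alpha} G \otimes_A N) = \pd_{A^{\sigma}_{\alpha} G}(A^{\sigma}_{\alpha} G \otimes_A N) \le \fdim A^{\sigma}_{\alpha} G$, so $\fdim A \le \fdim A^{\sigma}_{\alpha} G$. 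Finally, for part (3), assume $_A(A^{\sigma}_{\alpha} G/\mathfrak{I})$ is projective. If $\gldim A^{\sigma}_{\alpha} G < \infty$, then $\gldim A < \infty$ by part (2), and applying the key lemma to $K = A^{\sigma}_{\alpha} G/\mathfrak{I}$, whose projective dimension over $A^{\sigma}_{\alpha} G$ is finite, shows the trivial representation is $A^{\sigma}_{\alpha} G$-projective. Conversely, if $\gldim A < \infty$ and the trivial representation is $A^{\sigma}_{\alpha} G$-projective, then Proposition 4.1 gives $\gldim A^{\sigma}_{\alpha} G = \gldim A < \infty$.
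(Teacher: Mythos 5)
Your proposal is correct, and it rests on the same fundamental input as the paper's proof --- the fact that $A^{\sigma}_{\alpha}G$ is a Frobenius extension of $A$, so induction and coinduction agree and Eckmann--Shapiro applies --- but it packages the argument differently. The paper proves part (1) directly: for $M$ with $\pd_{A^{\sigma}_{\alpha}G}M = r$ and an arbitrary coefficient module $N$, it applies the long exact sequence of $\Ext_{A^{\sigma}_{\alpha}G}(M,-)$ to $0 \to K \to A^{\sigma}_{\alpha}G\otimes_A N \to N \to 0$ and uses $\Ext^r_{A^{\sigma}_{\alpha}G}(M, A^{\sigma}_{\alpha}G\otimes_A N)\cong \Ext^r_A(M,N)$ to kill $\Ext^r_{A^{\sigma}_{\alpha}G}(M,N)$ for every $N$, forcing $r = \pd_A M$. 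You instead isolate the clean intermediate statement that a module with projective restriction and finite projective dimension over the crossed product is already projective, proved by showing $\Ext^{\geqslant 1}_{A^{\sigma}_{\alpha}G}(K, A^{\sigma}_{\alpha}G)=0$ via coinduction and then dimension-shifting; part (1) follows by passing to the $m$-th syzygy. Your lemma is a nice complement to the paper's Proposition 4.1 (which draws the same conclusion from projectivity of the trivial representation rather than from finiteness of $\pd_{A^{\sigma}_{\alpha}G}K$), and it lets you handle part (3) by a direct application rather than through part (1). Your argument for $\fdim A \leqslant \fdim A^{\sigma}_{\alpha}G$ via induced modules is also spelled out in more detail than the paper's appeal to Remark 3.4, and is correct. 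All steps check out; the only point worth flagging is that the isomorphism $\Hom_A(A^{\sigma}_{\alpha}G, A)\cong A^{\sigma}_{\alpha}G$ of left modules (the Frobenius property) is exactly the unproved assertion the paper also relies on when it declares induction and coinduction naturally isomorphic, so you are not assuming more than the paper does.
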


\begin{proof}
Note that for crossed products, the induction functor $A ^{\sigma} _{\alpha} G \otimes _A -$ and the coinduction functor $\Hom _A (A ^{\sigma} _{\alpha} G, -)$ are naturally isomorphic. Therefore, for $M \in A ^{\sigma} _{\alpha} G \module$ and $N \in A\module$, we have the Frobenius reciprocity
\begin{equation*}
\Hom _A (M, N) \cong \Hom _{A ^{\sigma} _{\alpha} G} (M, A ^{\sigma} _{\alpha} G \otimes_A N)
\end{equation*}
and the Eckmann-Shapiro formula (see Corollary 2.8.4 in \cite{BD}) for $n \geqslant 1$:
\begin{equation*}
\Ext^n _A (M, N) \cong \Ext^n _{A ^{\sigma} _{\alpha} G} (M, A ^{\sigma} _{\alpha} G \otimes_A N).
\end{equation*}

To prove (1), we only need to consider the case that $\pd _{A ^{\sigma} _{\alpha} G} M < \infty$. Let $r = \pd _{A ^{\sigma} _{\alpha} G} M$ and $s = \pd_A M$. By Remark 3.4, $s \leqslant r$.

For an arbitrary $N \in A ^{\sigma} _{\alpha} G \module$, since the map $\varphi: A ^{\sigma} _{\alpha} G \otimes _A N \to N$ given by $\lambda \otimes v \mapsto \lambda v$ is a surjective $A ^{\sigma} _{\alpha} G$-module homomorphism, we have a short exact sequence $0 \to K \to A ^{\sigma} _{\alpha} G \otimes _A N \to N \to 0$. Applying $\Hom _{A ^{\sigma} _{\alpha} G} (M, -)$ to it, we get a long exact sequence:
\begin{align*}
& \ldots \to \Ext _{A ^{\sigma} _{\alpha} G} ^r (M, K) \to \Ext _{A ^{\sigma} _{\alpha} G} ^r (M, A ^{\sigma} _{\alpha} G \otimes _A N) \to\\
& \Ext _{A ^{\sigma} _{\alpha} G} ^r (M, N) \to \Ext _{A ^{\sigma} _{\alpha} G} ^{r+1} (M, K) \to \ldots
\end{align*}
But $\Ext _{A ^{\sigma} _{\alpha} G} ^{r+1} (M, K) = 0$ since $\pd _{A ^{\sigma} _{\alpha} G}  M = r$. If $\pd_A M = s < r$, then
\begin{equation*}
\Ext _{A ^{\sigma} _{\alpha} G} ^r (M, A ^{\sigma} _{\alpha} G \otimes N) \cong \Ext _A^r (M, N) = 0
\end{equation*}
by the Eckmann-Shapiro formula. Consequently, $\Ext _{A ^{\sigma} _{\alpha} G} ^r (M, N) = 0$. But $N \in A ^{\sigma} _{\alpha} G \module$ is arbitrary. Therefore, $\pd _{A ^{\sigma} _{\alpha} G} M < r$. This contradiction tells us that $s = r$, and (1) is established.

To classify global dimension of $A ^{\sigma} _{\alpha} G$, we still only need to consider the case that $\gldim A ^{\sigma} _{\alpha} G < \infty$. In this situation, $\pd _{A ^{\sigma} _{\alpha} G} M < \infty$ for every $M \in A ^{\sigma} _{\alpha} G \module$. Therefore, by (1), $\pd _{A ^{\sigma} _{\alpha} G} M = \pd_A M$. Consequently, $\gldim A ^{\sigma} _{\alpha} G \leqslant \gldim A$. But by Remark 3.4, $\gldim A ^{\sigma} _{\alpha} G \geqslant \gldim A$, and the equality follows.

Note that $\fdim A ^{\sigma} _{\alpha} G = \sup \{ \pd _{A ^{\sigma} _{\alpha} G} M \mid \pd _{A ^{\sigma} _{\alpha} G} M < \infty \}$. By (1), if $\pd _{A ^{\sigma} _{\alpha} G} M < \infty$, then $\pd _{A ^{\sigma} _{\alpha} G} M = \pd_A M \leqslant \fdim A$. Consequently, $\fdim A ^{\sigma} _{\alpha} G \leqslant \fdim A$. But by Remark 3.4, we have $\fdim A ^{\sigma} _{\alpha} G \geqslant \fdim A$. This forces $\fdim A ^{\sigma} _{\alpha} G = \fdim A$.

Now we turns to (3). If $A ^{\sigma} _{\alpha} G / \mathfrak{I}$ is projective as an $A ^{\sigma} _{\alpha} G$-module, by Proposition 4.1 $\gldim A ^{\sigma} _{\alpha} G = \gldim A$. Therefore, $\gldim A < \infty$ implies $\gldim A ^{\sigma} _{\alpha} G < \infty$. Conversely, if $\gldim A ^{\sigma} _{\alpha} G < \infty$, clearly $\gldim A < \infty$ and $\pd _{A ^{\sigma} _{\alpha} G} A ^{\sigma} _{\alpha} G / \mathfrak{I} < \infty$. By the first statement, $\pd _{A ^{\sigma} _{\alpha} G} A ^{\sigma} _{\alpha} G / \mathfrak{I} = \pd_A A ^{\sigma} _{\alpha} G / \mathfrak{I} = 0$.
\end{proof}

We have the following corollary for skew group rings:

\begin{corollary}
Let $A ^{\sigma} G$ be a skew group ring with $A$ left Noetherian and $G$ a finite group.
\begin{enumerate}
\item The skew group ring $A ^{\sigma} G$ has finite global dimension if and only if so does $A$ and the trivial representation is projective.
\item If the trivial representation is a projective $A ^{\sigma} G$-module, then $A ^{\sigma} G$ and $A$ have the same global dimension.
\item If $A$ is commutative and $A ^{\sigma} G$ has finite global dimension, then $A ^{\sigma} G$ is a separable extension over $A$. In particular, $A ^{\sigma} G$ and $A$ have the same global dimension, finitistic dimension and strong global dimension.
\end{enumerate}
\end{corollary}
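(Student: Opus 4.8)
The plan is to deduce all three parts from the machinery already assembled, the decisive observation being that for a skew group ring the trivial representation is especially transparent. As recorded in the preliminaries, $A^{\sigma}G / \mathfrak{I} \cong A$ as an $A$-module via the identification sending $\sum_{x \in G} \sigma_x$ to $1$, with action $\sigma_x \cdot a = \sigma_x(a)$. Thus the trivial representation, restricted to $A$, is free of rank one and in particular projective. Consequently the standing hypothesis of part (3) of Theorem 4.3 --- that $A^{\sigma}G / \mathfrak{I}$ be $A$-projective --- is automatically satisfied here, and part (1) is then immediate: $\gldim A^{\sigma}G < \infty$ if and only if $\gldim A < \infty$ and the trivial representation is a projective $A^{\sigma}G$-module.

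For part (2) I would invoke Proposition 4.1 directly with $H = G$. Once the trivial representation is assumed projective as an $A^{\sigma}G$-module, that proposition yields $\gldim A^{\sigma}G = \gldim A$, which is exactly the claim.

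For part (3), suppose $A$ is commutative and $\gldim A^{\sigma}G < \infty$. By part (1) the trivial representation is projective as an $A^{\sigma}G$-module, and the final clause of Proposition 4.2 --- which treats precisely the commutative case --- identifies this with the assertion that $A^{\sigma}G$ is a separable extension over $A$. For the ``in particular'' statement I would apply Theorem 3.3 with $R = A^{\sigma}G$ and $S = A$. Its hypotheses are readily checked: $A^{\sigma}G$ is a free $A$-module of rank $|G|$, so that $_A(A^{\sigma}G)$ is finitely generated projective and $(A^{\sigma}G)_A$ is flat; $A^{\sigma}G$ is left Noetherian because it is finitely generated as a module over the left Noetherian ring $A$; and separability has just been established. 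Theorem 3.3 then delivers the common value of global dimension, finitistic dimension, and strong global dimension.

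The argument is essentially bookkeeping, since the substantive content resides in Theorem 4.3, Proposition 4.1, Proposition 4.2, and Theorem 3.3; I do not expect a genuine obstacle. The only point demanding slight care is the verification that $A^{\sigma}G$ is left Noetherian, needed to legitimately invoke Theorem 3.3, and this rests on the standard fact that a ring which is finitely generated as a module over a left Noetherian subring is again left Noetherian.
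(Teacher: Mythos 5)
Your proposal is correct and follows essentially the same route as the paper: part (1) from Theorem 4.3(3) via the observation that the trivial representation restricts to a free $A$-module, part (2) from Proposition 4.1, and part (3) from Proposition 4.2 combined with Theorem 3.3. Your explicit verification of the hypotheses of Theorem 3.3 (projectivity, flatness, and Noetherianity of $A^{\sigma}G$) is a welcome bit of extra care that the paper leaves implicit.
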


\begin{proof}
The first statement follows from (3) of the above theorem since for skew group rings, the trivial representation $A$ is a free $A$-module. The second one follows from Proposition 4.1. Note that according to (1), the given condition in (3) implies that the trivial representation $A$ is a projective $A ^{\sigma} G$-module, so $A ^{\sigma} G$ is a separable extension over $A$ by the second part of Proposition 4.2.
\end{proof}

It is surprising in some sense to the author that $A ^{\sigma} _{\alpha} G$ and $A$ always have the same finitistic dimension. This makes the conjecture posted in \cite{L2} by the author trivial, which asks whether $\fdim A ^{\sigma} G < \infty$ whenever $\fdim A < \infty$.

We end this section by an example, which tells us that for noncommutative rings, separable extension in general is much stronger than the condition that the trivial representation is projective.

\begin{example} \normalfont
Let $A$ be the path algebra of the following quiver with relations $\beta \gamma = \gamma \beta = 0$ over an algebraically closed field $k$ with characteristic 2. Let $G$ be a cyclic group of order 2 generated by $g$, which permutes vertices $x$ and $y$, and arrows $\beta$ and $\gamma$. This action determines a skew group algebra $A ^{\sigma} G$.
\begin{equation*}
\xymatrix{ x \ar@/^/[r] ^{\beta} & y \ar@/^/[l] ^{\gamma} }
\end{equation*}

It is easy to check that the center of $A$ is the one dimensional space spanned by $1_A = 1_x + 1_y$. However, since $g$ fixes every scalar, the trace map sends every element in the center of $A$ to 0. Therefore, by Proposition 2.1 in \cite{NVV}, $A ^{\sigma} G$ is not a separable extension over $A$. On the other hand, we can check that the trace map sends both $1_x$ and $1_y$ to the identity of $A ^{\sigma} G$. By Proposition 4.2 and Corollary 4.4, the trivial representation $A$ is a projective $A ^{\sigma} G$-module. Therefore, $A$ and $A ^{\sigma} G$ have the same global dimension (which is $\infty$) and finitistic dimension (which is 0).

Actually, since $1_x$ and $1_y$ are isomorphic idempotent in $A ^{\sigma} G$, the skew group algebra $A ^{\sigma} G$ in this example is actually Morita equivalent to $1_x A^{\sigma} G 1_x$. A direct computation shows that $1_x A^{\sigma} G 1_x \cong A^G \cong k[X] / (X^2)$.
\end{example}

\section{Crossed products of semiprimary algebras}

In this section let $A$ be a left Noetherian \textit{semiprimary} algebra over an algebraically closed field $k$ with characteristic $p \geqslant 0$. That is, the Jacobson radical $\mathfrak{R}$ of $A$ is nilpotent and $A / \mathfrak{R}$ is a finite dimensional $k$-algebra. By the main result of last section, $A _{\alpha} ^{\sigma} G$ has the same homological dimensions as $A _{\alpha} ^{\sigma} S$ for every Sylow $p$-subgroup $S \leqslant G$. Thus we mainly focus on $A _{\alpha} ^{\sigma} S$ in this section.

Take a complete set $E = \{ e_i \} _{i \in [n]}$ of primitive orthogonal idempotents in $A$. Then $_AA = \oplus _{i \in [n]} Ae_i$. Throughout this section we assume that there is a Sylow $p$-subgroup $S \leqslant G$ such that $E$ is an $S$-set; that is, $E$ is closed under the action of $S$.

Two elements $e, f \in E$ are said to be \textit{isomorphic} if $Ae \cong Af$ as $A$-modules. Note that $e$ and $f$ are isomorphic if and only if there are elements $u, v \in A$ such that $uv = e$ and $vu = f$. When identifying $e_i$ with $e_i1_S$, elements in $E$ are pairwise orthogonal idempotents in the crossed product $A _{\alpha} ^{\sigma} S$. It is obvious that isomorphic idempotents in $A$ are still isomorphic regarded as idempotents in $A ^{\sigma} _{\alpha} S$. Moreover, for every $e \in E$ and $x \in S$, $e$ and $\sigma_x (e)$ are isomorphic in $A _{\alpha} ^{\sigma} S$. We will show that $E$ is also a complete set of primitive orthogonal idempotents in $A _{\alpha} ^{\sigma} S$.

Clearly, for every $x \in S$, $\sigma_x$ maps $\mathfrak{R}$ onto $\mathfrak{R}$. In particular, $\mathfrak{R} S = \bigoplus _{x \in S} \mathfrak{R} \sigma_x$ is a two-sided ideal of $A _{\alpha} ^{\sigma} S$. Moreover, By Corollary 3.12 in page 86 of \cite{K}, $\mathfrak{R}$ is the intersection of the radical of $A _{\alpha} ^{\sigma} S$ and $A$. Therefore, $\mathfrak{R}$ is contained in the radical of $A _{\alpha} ^{\sigma} S$, so is $\mathfrak{R} S$. Let $\overline {A _{\alpha}^{\sigma} S}$ be the quotient algebra
\begin{equation*}
A _{\alpha}^{\sigma} S / \mathfrak{R} S = \bigoplus _{x \in S} A \sigma_x / \bigoplus _{x \in S}\mathfrak{R} \sigma_x \cong \bigoplus _{x \in S} (A/\mathfrak{R}) \sigma_x = \bar{A} _{\alpha}^{\sigma} S,
\end{equation*}
which is a crossed product as well. Since $\mathfrak{R} S$ is contained in the radical of $A _{\alpha}^{\sigma} S$, a complete set of primitive orthogonal idempotents in $\bar{A} _{\alpha}^{\sigma} S$ can be obtained from a complete set of primitive orthogonal idempotents in $A _{\alpha}^{\sigma} S$ by taking quotients. Conversely, given a complete set of primitive orthogonal idempotents in $\bar{A} _{\alpha}^{\sigma} S$, it can be lifted to a complete set of primitive orthogonal idempotents in $A _{\alpha}^{\sigma} S$.

With this observation, we have:

\begin{lemma}
The chosen set $E$ is a complete set of primitive orthogonal idempotents in $A _{\alpha}^{\sigma} S$.
\end{lemma}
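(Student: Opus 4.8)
The plan is to pass to the semisimple quotient $\bar{A} = A/\mathfrak{R}$ and then recognize the relevant corner ring of $\bar{A} _{\alpha} ^{\sigma} S$ as a twisted group algebra of a $p$-group, which is local. First I would dispose of orthogonality and completeness, which come essentially for free: since $\sigma_{1_G} = \mathrm{id}$ and $\alpha(1_G,1_G)=1_A$, for $e_i,e_j\in E$ the product $e_i\ast e_j$ in $A _{\alpha} ^{\sigma} S$ coincides with $e_i\cdot e_j$ in $A$, so the $e_i$ remain pairwise orthogonal idempotents with $\sum_i e_i = 1$. Thus only their primitivity in $A _{\alpha} ^{\sigma} S$ must be shown. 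Recall that an idempotent $e$ of a (semiperfect) ring $R$ is primitive if and only if the corner ring $eRe$ has no idempotents other than $0$ and $e$, which for the rings here is equivalent to $eRe$ being local. Since $\mathfrak{R}S\subseteq\rad(A _{\alpha} ^{\sigma} S)$ and $\rad(e_iA _{\alpha} ^{\sigma} Se_i)=e_i\rad(A _{\alpha} ^{\sigma} S)e_i$, we get $e_i(\mathfrak{R}S)e_i\subseteq\rad(e_iA _{\alpha} ^{\sigma} Se_i)$; hence $e_iA _{\alpha} ^{\sigma} Se_i$ is local if and only if its image in $\bar{A} _{\alpha} ^{\sigma} S = A _{\alpha} ^{\sigma} S/\mathfrak{R}S$ is local. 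It therefore suffices to show that each image $\bar{e}_i$ is primitive in the crossed product $\bar{A} _{\alpha} ^{\sigma} S$ of the finite dimensional semisimple algebra $\bar{A}$.

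Next I would compute the corner ring $R := \bar{e}\,\bar{A} _{\alpha} ^{\sigma} S\,\bar{e} = \bigoplus_{x\in S}\bar{e}\,\bar{A}\,\sigma_x(\bar{e})\,\sigma_x$ for a fixed $e=e_i$. Because $\bar{A}$ is semisimple and $k$ is algebraically closed, the $x$-component $\bar{e}\,\bar{A}\,\sigma_x(\bar{e})\cong\Hom_{\bar{A}}(\bar{A}\sigma_x(\bar{e}),\bar{A}\bar{e})$ is nonzero exactly when $\bar{A}\bar{e}\cong\bar{A}\sigma_x(\bar{e})$, and is then one dimensional by Schur's lemma. Setting $T=\{x\in S\mid \bar{A}\bar{e}\cong\bar{A}\sigma_x(\bar{e})\}$ — the stabilizer in $S$ of the simple block of $\bar{A}$ containing $\bar{e}$ — the relation $\sigma_x\sigma_y=\iota_{\alpha(x,y)}\sigma_{xy}$ shows this isomorphism class is preserved under the twisted multiplication, so $T$ is a subgroup of $S$. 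Since every nonzero homomorphism between isomorphic simple modules is an isomorphism, each one dimensional component $R_x$ with $x\in T$ is spanned by an invertible element of $R$; hence $R=\bigoplus_{x\in T}R_x$ is a twisted group algebra $k_{\gamma}[T]$ of $T$ over $k$ for some $2$-cocycle $\gamma$, with identity component $R_{1_G}=k\bar{e}$.

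Finally I would prove $R$ is local. As $T\leqslant S$ and $S$ is a $p$-group, $T$ is a $p$-group; the automorphisms $\sigma_x$ fix $k$ pointwise, so $T$ acts trivially on $k^{\times}$. Because $k$ is algebraically closed of characteristic $p$, the group $k^{\times}$ is uniquely $p$-divisible (it is $p$-divisible and has no $p$-torsion), so multiplication by $p$ is an automorphism of $H^2(T,k^{\times})$; but this group is annihilated by $|T|$, a power of $p$, which forces $H^2(T,k^{\times})=0$. Hence $\gamma$ is a coboundary and $R\cong kT$, the ordinary group algebra of a finite $p$-group over a field of characteristic $p$, which is local. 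Therefore $\bar{e}$ is primitive in $\bar{A} _{\alpha} ^{\sigma} S$, and by the reduction above each $e_i$ is primitive in $A _{\alpha} ^{\sigma} S$; combined with orthogonality and completeness this proves the lemma. When $p=0$ the Sylow $p$-subgroup $S$ is trivial and $A _{\alpha} ^{\sigma} S = A$, so the statement is immediate.

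The step I expect to be the main obstacle is the identification of $R$ as a twisted group algebra with invertible, one dimensional homogeneous components, together with the verification that a twisted group algebra of a $p$-group over $k$ is local; the cohomological vanishing $H^2(T,k^{\times})=0$ is the key ingredient that makes the characteristic $p$ and the Sylow hypothesis do their work.
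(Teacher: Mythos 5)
Your proposal is correct and follows essentially the same route as the paper: reduce modulo $\mathfrak{R}S$ to the semisimple quotient $\bar{A}_{\alpha}^{\sigma}S$, identify the corner ring $e_i(\bar{A}_{\alpha}^{\sigma}S)e_i$ as a twisted group algebra of the stabilizer subgroup of $S$ (your $T$ is the paper's $H$), and conclude locality because a twisted group algebra of a $p$-group over $k$ is an ordinary group algebra. The only difference is cosmetic: where the paper cites Karpilovsky and Conlon for the vanishing of $H^2$, you supply the short unique-$p$-divisibility argument directly, and you also spell out the (routine) orthogonality and completeness and the $p=0$ case.
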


\begin{proof}
By the above observation, we only need to show that $E$ is a complete set of primitive idempotents in $\bar{A} _{\alpha}^{\sigma} S$. Clearly, it is enough to prove that every idempotent in $E$ is primitive in $\bar{A} _{\alpha} ^{\sigma} S$. We deduce it by showing that the algebra $e_i (\bar{A} _{\alpha} ^{\sigma} S) e_i$ is a finite dimensional local algebra for every $i \in [n]$; here and later by convention we identity idempotents with their images in the quotient algebra. Since $e_i$ is a primitive idempotent in $A$, it is still a primitive idempotent in $\bar{A} = A / \mathfrak{R}$.

Note that
\begin{equation*}
e_i (\bar{A} _{\alpha} ^{\sigma} S) e_i = \bigoplus _{x \in S} e_i \bar{A} \sigma_x (e_i) \sigma_x.
\end{equation*}
Let $H = \{ x \in S \mid \bar{A} \sigma_x(e_i) \cong \bar{A}e_i \}$. Because $\bar{A}$ is semisimple, $H = \{ x \in S \mid e_i \bar{A} \sigma_x (e_i) \neq 0 \}$. This is a subgroup of $S$. Indeed, for $x, y \in H$, suppose that $e \bar{A} \sigma_x (e) \neq 0$ and $e \bar{A} \sigma_y (e) \neq 0$. That is, $\bar{A} e \cong \bar{A} \sigma_x(e) \cong \bar{A} \sigma_y (e)$. Applying $\sigma_x$ to $e \bar{A} \sigma_x(e)$ we get $\sigma_x (e) A \alpha (x, x) \sigma_{x^2} (e) \alpha (x, x)^{-1} \neq 0$, so $\sigma_x(e) \bar{A} \sigma_{x^2} (e) \neq 0$. Therefore, $\bar{A} e \cong \bar{A} \sigma_x (e) \cong \bar{A} \sigma_{x^2} (e)$. Repeating this process, we have $\bar{A} \sigma_{x^{-1}} (e) \cong \bar{A} e \cong \bar{A} \sigma_y (e)$, so $\sigma _{x^{-1}} (e) \bar{A} \sigma_y (e) \neq 0$. Applying $\sigma_x$ one more time we deduce that $e \bar{A} \sigma_{xy} (e) \neq 0$. That is, $xy \in H$, so $H$ is a group.

We observe that
\begin{equation*}
e_i (\bar{A} _{\alpha} ^{\sigma} S) e_i = \bigoplus _{x \in H} e_i \bar{A} \sigma_x (e_i) \sigma_x.
\end{equation*}
is a strongly $H$-graded algebra (\cite{K}). Moreover, $e_i \bar{A} e_i \cong k$ since $k$ is algebraically closed. By Proposition 1.11 in page 71 of \cite{K}, it is a crossed product of $k$ with $H$, and hence a twisted group algebra. It is well known that this is a local algebra.\footnote{Actually, by changing basis, this twisted group algebra is isomorphic to an ordinary group algebra. For details, see the proof of the theorem in Section 1 of \cite{C}. This can also be deduced from the fact that $H^2 (S, k) = 1$ for a $p$-group $S$; see for example Proposition 6.1 in page 42 of \cite{K}.} The conclusion follows.
\end{proof}

The following proposition motivates us to consider free action of $S$ on $E$.

\begin{proposition}
Let $A ^{\sigma} _{\alpha} S$ and $E$ be as before. If $A$ is finite dimensional and the action of $S$ on $E$ is not free, then $\gldim A ^{\sigma} _{\alpha} S = \infty$.
\end{proposition}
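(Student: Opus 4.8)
The plan is to exhibit a loop at a vertex of the $\Ext$-quiver of $\Lambda = A^{\sigma}_{\alpha} S$ and then invoke the strong no loop theorem of \cite{ILP}: since $A$ is finite dimensional and $S$ is finite, $\Lambda$ is a finite dimensional $k$-algebra, so any simple $\Lambda$-module $T$ with $\Ext^1_\Lambda(T,T)\neq 0$ has $\pd_\Lambda T=\infty$, forcing $\gldim\Lambda=\infty$. Because the action is not free, I can fix $e=e_i\in E$ and $1\neq x\in S$ with $\sigma_x(e)=e$; write $H=\{\,z\in S\mid \sigma_z(e)=e\,\}$ for the stabilizer, a nontrivial $p$-subgroup of $S$. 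Let $T_e$ be the simple top of the indecomposable projective $\Lambda e$. The goal is then to prove $\Ext^1_\Lambda(T_e,T_e)\neq 0$.

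First I would reduce to the semisimple-base crossed product. Set $\mathfrak{J}=\rad\Lambda$ and recall from the discussion preceding Lemma 5.1 that $\mathfrak{R}S\subseteq\mathfrak{J}$ and $\overline{\Lambda}=\Lambda/\mathfrak{R}S\cong\bar{A}^{\sigma}_{\alpha}S$, with $E$ still a complete set of primitive orthogonal idempotents and $T_e$ still simple over $\overline{\Lambda}$. The surjection $\Lambda\to\overline{\Lambda}$ induces a surjection $e(\mathfrak{J}/\mathfrak{J}^2)e\to e(\rad\overline{\Lambda}/(\rad\overline{\Lambda})^2)e$. Since $k$ is algebraically closed, $\dim_k e(\mathfrak{J}/\mathfrak{J}^2)e=\dim_k\Ext^1_\Lambda(T_e,T_e)$ and likewise over $\overline{\Lambda}$, so it suffices to produce a loop at $e$ in the quiver of $\overline{\Lambda}$, where now the base algebra $\bar{A}$ is semisimple.

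The heart of the matter is to show $\Ext^1_{\overline{\Lambda}}(T_e,T_e)\neq 0$. Let $H'=\{\,z\in S\mid \bar{A}\sigma_z(e)\cong\bar{A}e\,\}$, the subgroup of $S$ shown to be a group in the proof of Lemma 5.1; since $\sigma_x(e)=e$ forces $x\in H'$, we have $H\subseteq H'$, so $H'$ is a nontrivial $p$-group. By Clifford theory for crossed products (\cite{K, M}), the simple $\overline{\Lambda}$-modules correspond to the $S$-orbits of simple $\bar{A}$-modules, and the block of $\overline{\Lambda}$ containing $T_e$ is Morita equivalent to the twisted group algebra of $H'$ over $\End_{\bar{A}}(\bar{A}e)=k$. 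Using Skolem--Noether to absorb $\sigma$ into the cocycle on the matrix factor of $\bar{A}$ stabilized by $H'$, this twisted group algebra is $k_\gamma H'$, which by the footnote to Lemma 5.1 is isomorphic to the ordinary group algebra $kH'$. Morita invariance of $\Ext$ then gives $\Ext^1_{\overline{\Lambda}}(T_e,T_e)\cong\Ext^1_{kH'}(k,k)=H^1(H',k)$, and the latter is nonzero because $H'$ is a nontrivial $p$-group and $\mathrm{char}\,k=p$. This is the required self-extension.

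Lifting this class along the surjection of the second step yields $\Ext^1_\Lambda(T_e,T_e)\neq 0$, and the strong no loop theorem then gives $\pd_\Lambda T_e=\infty$, whence $\gldim\Lambda=\infty$. I expect the Clifford-theoretic identification of the block to be the main obstacle. It is tempting to argue using only the corner $e\overline{\Lambda}e\cong kH'$ already computed in Lemma 5.1, but its non-semisimplicity alone does \emph{not} produce a self-extension of $T_e$: the radical of a local corner may be routed through other vertices, so that the corner is nonzero while no loop occurs at $e$. One genuinely needs that $T_e$ is the \emph{unique} simple module in its block, i.e.\ the full Morita picture, together with careful bookkeeping of the matrix factors of $\bar{A}$ and of the cocycle $\alpha$ via Skolem--Noether and the triviality of the relevant Schur multiplier for a $p$-group in characteristic $p$.
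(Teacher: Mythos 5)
Your proposal is correct, and its core differs from the paper's argument. The paper avoids Clifford theory entirely: it passes to the cyclic subgroup $H=\langle x\rangle$ generated by a single element fixing $e$, so that $\bar{A}^{\sigma}_{\alpha}He=\bigoplus_{z\in H}\bar{A}e\sigma_z$ has a completely explicit form; it then shows by a direct $\Hom$-computation that \emph{every} composition factor of this non-simple module is $S_e$ (using that $f\bar{A}e=0$ for $f$ not isomorphic to $e$), and extracts the self-extension from a syzygy diagram, $\Ext^1(S_e,S_e)\cong\Hom(\Omega S_e,S_e)\supseteq\Hom(M,S_e)\neq 0$. You instead work over all of $S$ with the full inertia group $H'$ of $\bar{A}e$ and identify the entire block of $\overline{\Lambda}$ containing $T_e$ with a matrix algebra over $k_{\gamma}H'\cong kH'$, reading off the loop as $H^1(H',k)\neq 0$. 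Your worry about the corner $e\overline{\Lambda}e$ is well placed --- a local non-semisimple corner alone does not force a loop at $e$ --- and the paper's composition-factor analysis is precisely its substitute for your block identification. Your Clifford-theoretic step is standard and does go through ($c=\sum_{z\in S/H'}\sigma_z(b)$ is a central idempotent of $\overline{\Lambda}$, $b$ is full in $c\overline{\Lambda}$, $b\overline{\Lambda}b\cong(b\bar{A})^{\sigma}_{\alpha}H'\cong M_n(k_{\gamma}H')$ by Skolem--Noether, and $H^2(H',k^{\ast})=0$), but it is the step you leave as a sketch, and it imports machinery the paper deliberately bypasses. What your route buys is more information: the block of $T_e$ is local up to Morita equivalence, with a unique simple module and $\dim_k\Ext^1_{\overline{\Lambda}}(T_e,T_e)$ equal to the minimal number of generators of $H'$; what the paper's route buys is a short, self-contained computation needing only Lemma 5.1. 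Both proofs ultimately rest on the same two external inputs: $e\bar{A}e\cong k$ and the strong no loop theorem of \cite{ILP}.
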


\begin{proof}
Since the action of $S$ on $E$ is not free, we can take some $e \in E$ and some $1 \neq x \in S$ such that $\sigma_x (e) = e$. Let $H = \langle x \rangle$, which is a nontrivial cyclic $p$-group. By Remark 3.4, it suffices to show that $\gldim A ^{\sigma} _{\alpha} H = \infty$.

Consider the quotient algebra $A ^{\sigma} _{\alpha} H / \mathfrak{R} H \cong \bar{A} ^{\sigma} _{\alpha} H$. Then $\bar{A} ^{\sigma} _{\alpha} H e$ is a projective $\bar{A} ^{\sigma} _{\alpha} H$-module, and clearly an $A ^{\sigma} _{\alpha} H$-module. Since elements in $H$ fix $Ae$, they fix $\bar{A}e$ as well, and we have
\begin{equation*}
\bar{A} _{\alpha} ^{\sigma} H e = \bigoplus _{x \in H} \bar{A} \sigma_x e = \bigoplus _{x \in H} \bar{A} \sigma_x (e) \sigma_x = \bigoplus _{x \in H} \sigma_x (\bar{A}e) \sigma_x = \bigoplus _{x \in H} \bar{A}e \sigma_x = \bar{A} eH,
\end{equation*}
and hence
\begin{equation*}
\Hom _{A ^{\sigma} _{\alpha} H} (A ^{\sigma} _{\alpha} He, \bar{A} ^{\sigma} _{\alpha} H e) \cong e (\bar{A} ^{\sigma} _{\alpha} H) e = (e \bar{A} e) H \cong k_{\alpha} H
\end{equation*}
is a twisted group algebra with nontrivial $H$. Since it is not a division ring, we conclude that $\bar{A} ^{\sigma} _{\alpha} H e$ is not a simple $A ^{\sigma} _{\alpha} H$-module.

Take $f \in E$ such that $A ^{\sigma} _{\alpha} Hf \ncong A ^{\sigma} _{\alpha} He$. Because isomorphic idempotents in $A$ viewed as idempotents in $A ^{\sigma} _{\alpha} H$ are still isomorphic, we deduce that $e$ and $f$ are not isomorphic in $A$, so $fAe \in \mathfrak{R}$, and hence $f \bar{A} e = 0$. Therefore,
\begin{equation*}
\Hom _{A ^{\sigma} _{\alpha} H} (A ^{\sigma} _{\alpha} Hf, \bar{A} ^{\sigma} _{\alpha} H e) \cong f (\bar{A} ^{\sigma} _{\alpha} H) e = (f \bar{A} e) H = 0,
\end{equation*}
which implies that all composition factors of $\bar{A} _{\alpha} ^{\sigma} H e$ are isomorphic to $S_e$, the simple $A _{\alpha} ^{\sigma} H$-module corresponding to the primitive idempotent $e$. Consequently, $(\bar{A} _{\alpha} ^{\sigma} H) e$ is a non-simple $A _{\alpha} ^{\sigma} H$-module with only composition factors isomorphic to $S_e$.

The short exact sequences $0 \to M \to \bar{A} _{\alpha} ^{\sigma} H e \to S_e \to 0$ and $0 \to \mathfrak{R} He \to A _{\alpha} ^{\sigma} He \to \bar{A} _{\alpha} ^{\sigma} H e \to 0$ give rise to the following diagram with exact rows and columns:
\begin{equation*}
\xymatrix{ & 0 \ar[r] & \mathfrak{R} He \ar[r] \ar[d] & \Omega S_e \ar[r] \ar[d] & M \ar[r] & 0\\
 & & A _{\alpha} ^{\sigma} H e \ar@{=}[r] \ar[d] & A _{\alpha} ^{\sigma} He \ar[d] \\
0 \ar[r] & M \ar[r] & \bar{A} _{\alpha} ^{\sigma} He \ar[r] & S_e \ar[r] & 0}
\end{equation*}
where $\Omega$ is the Heller operator. Note that $M \neq 0$ and has only composition factors isomorphic to $S_e$.

Applying $\Hom _{A _{\alpha} ^{\sigma} H} (-, S_e)$ to the last column we conclude that $\Ext_{A _{\alpha} ^{\sigma} H} ^1 (S_e, S_e) \cong \Hom _{A _{\alpha} ^{\sigma} H} (\Omega S_e, S_e)$. Applying the same functor to the top row we get an inclusion $\Hom _{A _{\alpha} ^{\sigma} H} (M, S_e) \to \Hom_ {A _{\alpha} ^{\sigma} H} (\Omega S_e, S_e)$. Since $\Hom _{A _{\alpha} ^{\sigma} H} (M, S_e) \neq 0$, we deduce that $\Ext _{A _{\alpha} ^{\sigma} H} ^1 (S_e, S_e) \neq 0$. By the strong no loop conjecture proved in \cite{ILP}, the projective dimension $\pd _{A _{\alpha} ^{\sigma} H} S_e = \infty$. Therefore, $\gldim A _{\alpha} ^{\sigma} H = \infty$.
The conclusion follows.
\end{proof}

Here the proof relies on the strong no loop conjecture, which requires $A$ to be an artinian $k$-algebra. We wonder if there is an alternate proof for arbitrary left Noetherian semiprimary $k$-algebras.

We describe two corollaries.

\begin{corollary}
Let $A$ be a finite dimensional algebra. Then a twisted group algebra $A_{\alpha} G$ has finite global dimension if and only if $\gldim A$ is finite and the order of $G$ is invertible. Moreover, in this situation we have $\gldim A = \gldim A _{\alpha} G$.
\end{corollary}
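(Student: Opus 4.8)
The plan is to recognize the twisted group algebra $A_\alpha G$ as the crossed product $A^\sigma_\alpha G$ with trivial $\sigma$, and then simply assemble the machinery already in place. I would fix a Sylow $p$-subgroup $S \leqslant G$, where $p$ is the characteristic of $k$. Since $|G:S|$ is coprime to $p$, it is invertible in $k$ and hence in $A$, so Corollary 3.7 gives $\gldim A_\alpha G = \gldim A_\alpha S$; this lets me replace $G$ by $S$ throughout. I would also record the two standing inequalities that come for free: $\gldim A \leqslant \gldim A_\alpha G$ by Remark 3.4 (as $A_\alpha G$ is free, hence finitely generated projective and flat, over $A$ on both sides).

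For the direction asserting finiteness, suppose $\gldim A < \infty$ and $|G|$ is invertible in $k$. Then $p \nmid |G|$, so the Sylow $p$-subgroup $S$ is trivial and $A_\alpha S = A$. Hence $\gldim A_\alpha G = \gldim A_\alpha S = \gldim A < \infty$. This single computation also delivers the asserted equality $\gldim A = \gldim A_\alpha G$ in the finite case, so no separate argument for the equality is needed. For the converse, suppose $\gldim A_\alpha G < \infty$. The inequality $\gldim A \leqslant \gldim A_\alpha G$ immediately forces $\gldim A < \infty$, so it remains only to show that $|G|$ is invertible in $k$. I would argue by contradiction: if $|G|$ is not invertible, then (as $k$ is a field) $p > 0$ and $p \mid |G|$, so $S$ is nontrivial. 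Because $\sigma$ is trivial for a twisted group algebra, every $\sigma_x$ fixes every idempotent in the chosen set $E$, so the action of $S$ on $E$ is certainly not free once $S \neq \{1\}$. Proposition 5.2 then applies verbatim and yields $\gldim A_\alpha S = \infty$, whence $\gldim A_\alpha G = \infty$, contradicting the hypothesis. Therefore $|G|$ must be invertible.

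I do not expect a genuine computational obstacle here, since the proof is essentially a bookkeeping assembly of Corollary 3.7, Remark 3.4, and Proposition 5.2. The one step carrying the real content is the observation that the triviality of $\sigma$ \emph{automatically} makes the $S$-action on $E$ non-free whenever $S$ is nontrivial, which is exactly the hypothesis Proposition 5.2 needs in the modular case $p \mid |G|$. This is precisely the feature that separates twisted group algebras from general skew group algebras, where a free action (and hence finite global dimension even when $p \mid |G|$) is possible; here it is unavailable, and that is what pins down invertibility of $|G|$ as a necessary condition.
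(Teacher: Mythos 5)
Your proposal is correct and follows essentially the same route as the paper: reduce to the Sylow $p$-subgroup $S$ via Corollary 3.7, use Remark 3.4 for the inequality $\gldim A \leqslant \gldim A_{\alpha} G$, and observe that triviality of $\sigma$ forces the $S$-action on $E$ to be non-free unless $S = 1$, so that Proposition 5.2 pins down invertibility of $|G|$. The paper phrases the last step directly ("$S$ must act freely on $E$, forcing $S=1$") rather than by contradiction, but the content is identical.
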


\begin{proof}
We always have $\gldim A_{\alpha} G = \gldim A_{\alpha} S \geqslant \gldim A$. Assume that $\gldim A_{\alpha} G < \infty$, then $\gldim A < \infty$. Moreover, since $G$ acts trivially on $A$, every chosen complete set of primitive orthogonal idempotents $E$ is closed under the action of $G$, and hence closed under the trivial action of $S$. But by the previous proposition, $S$ must acts freely on $E$. This forces $S = 1$. That is, $|G|$ is invertible. Conversely, if $|G|$ is invertible, then $\gldim A_{\alpha} G = \gldim A _{\alpha} S$. But $A _{\alpha} S = A$. The conclusion follows.
\end{proof}

Let $K$ be the kernel of $\sigma: S \to \Aut_k (A)$, which is a subgroup of $S$. The action of $S$ on $A$ is said to be \textit{faithful} if $K$ is the trivial group.

\begin{corollary}
Let $A$ be a finite dimensional algebra. If the action of $S$ on $A$ is not faithful, then $\gldim A _{\alpha} ^{\sigma} G = \infty$.
\end{corollary}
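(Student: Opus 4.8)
The plan is to reduce this statement to Proposition 5.2 by observing that a non-faithful action of $S$ on $A$ forces a non-free action of $S$ on the chosen idempotent set $E$, and then to transport the resulting infinite global dimension from $A_{\alpha}^{\sigma} S$ up to $A_{\alpha}^{\sigma} G$ via Corollary 3.7. First I would unpack the hypothesis. Since the action of $S$ on $A$ is not faithful, the kernel $K$ of $\sigma: S \to \Aut_k(A)$ is nontrivial, so I can choose some $1 \neq x \in K$, that is, $\sigma_x = \mathrm{id}_A$. In particular $\sigma_x$ fixes every primitive idempotent, so $\sigma_x(e) = e$ for all $e \in E$. Because $x \neq 1$, this exhibits a nontrivial element of $S$ fixing a point of $E$ (indeed fixing all of $E$), so the action of $S$ on $E$ is not free.

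With this in hand, the hypotheses of Proposition 5.2 are met: $A$ is finite dimensional and $S$ acts non-freely on $E$. I would therefore invoke that proposition directly to conclude $\gldim A_{\alpha}^{\sigma} S = \infty$. Finally I would pass from $S$ to $G$. As $S$ is a Sylow $p$-subgroup of $G$, the index $|G : S|$ is coprime to $p = \mathrm{char}(k)$ and hence invertible in the $k$-algebra $A$; Corollary 3.7 then yields $\gldim A_{\alpha}^{\sigma} G = \gldim A_{\alpha}^{\sigma} S = \infty$, completing the argument. (In the case $p = 0$ the Sylow subgroup $S$ is trivial, $\sigma$ is automatically faithful, and the statement is vacuous, so there is nothing to prove.)

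The main obstacle, such as it is, is merely the bookkeeping step that non-faithfulness of the action on $A$ upgrades to non-freeness of the action on $E$. Once this is noted, the result is a formal consequence of Proposition 5.2 together with Corollary 3.7, and no genuinely new difficulty arises.
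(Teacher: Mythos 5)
Your argument is correct and follows essentially the same route as the paper: non-faithfulness produces a nontrivial element acting trivially, hence a non-free action on $E$, and Proposition 5.2 plus the passage from $S$ to $G$ finishes the proof. The only (immaterial) difference is that the paper applies Proposition 5.2 to the nontrivial subgroup $H\leqslant S$ acting trivially and then uses the restriction inequality $\gldim A_{\alpha}^{\sigma}G\geqslant\gldim A_{\alpha}^{\sigma}H$, whereas you apply it to $S$ itself and invoke the separability equality of Corollary 3.7.
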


\begin{proof}
Since the action of $S$ on $A$ is not faithful, we can find a nontrivial subgroup $H \leqslant S$ such that every element in $H$ acts on $A$ trivially. In particular, for a chosen complete set of primitive orthogonal idempotents $E$, it is closed under the trivial action of $H$. By Proposition 5.2, $\gldim A _{\alpha} ^{\sigma} H = \infty$. But we always have $\gldim A _{\alpha} ^{\sigma} G \geqslant \gldim A _{\alpha} ^{\sigma} H$. The conclusion follows.
\end{proof}

However, even if $\gldim A < \infty$ and $S$ acts faithfully on $E$, the global dimension of $A _{\alpha} ^{\sigma} G$ might not be finite.

\begin{example} \normalfont
Let $A$ be the path algebra of the following quiver over an algebraically closed field of characteristic 2, and let $G = S = \langle x \rangle$ be a cyclic group of order 2. The action of $S$ on $A$ is determined by $x (1) = 3$ and $x(2) = 2$. This action is faithful.
\begin{equation*}
\xymatrix{ 1 \ar[r] & 2 & 3 \ar[l]}
\end{equation*}
However, the skew group algebra $A ^{\sigma} S$ is Morita equivalent to the path algebra of the following quiver with relations $\delta^2 = 0$, which has infinite global dimension. This is because the action of $S$ on the chosen set $E = \{e_1, e_2, e_3\}$ of primitive idempotents is not free.
\begin{equation*}
\xymatrix{ 1 \ar[r] \ar@(ul,dl)[]|{\delta}& 2}
\end{equation*}
\end{example}

Let $s$ denote the number of $S$-orbits in $E$ and take a representative $e_i$ from each $S$-orbit. Without loss of generality, we can assume that $e_1, e_2, \ldots, e_s$ give a chosen set of representatives from distinct orbits. Let $\epsilon = e_1 + e_2 + \ldots + e_s$. A special case of the following result is described in Proposition 1.6 in page 67 of \cite{K}.

\begin{proposition}
Let $A _{\alpha} ^{\sigma} S$, $E$, and $\epsilon$ be as above and suppose that the action of $S$ on $E$ is free. Then $A _{\alpha} ^{\sigma} S$ is an $|S| \times |S|$ matrix algebra over $\epsilon A ^{\sigma} _{\alpha} S \epsilon$.
\end{proposition}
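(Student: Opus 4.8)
The plan is to identify $A_{\alpha}^{\sigma}S$ with the endomorphism ring of a suitable module and read off the matrix structure from a direct-sum decomposition, rather than writing matrix units down by hand (the latter is awkward here because conjugating $\epsilon$ by $\sigma_x$ produces off-diagonal terms $\epsilon A \epsilon_{y^{-1}z}$ that need not vanish). Throughout write $R = A_{\alpha}^{\sigma}S$ and $m = |S|$. First I would record the combinatorial effect of freeness: since $S$ acts freely on $E$, every $S$-orbit has exactly $m$ elements, so $n = sm$ and the family $\{\sigma_x(e_i)\mid 1\le i\le s,\ x\in S\}$ lists $E$ without repetition. By Lemma 5.1, $E$ is a complete set of primitive orthogonal idempotents in $R$, so ${}_RR = \bigoplus_{e\in E}Re$ as a left $R$-module.

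The key step is the module isomorphism $Re\cong R\sigma_x(e)$ for all $e\in E$ and $x\in S$. Each $\sigma_x$ is a unit of $R$, since $\sigma_x\ast\sigma_{x^{-1}} = \alpha(x,x^{-1})\in U(A)$, and a short computation with the twisted multiplication gives $\sigma_x\ast e\ast\sigma_x^{-1} = \sigma_x(e)$; right multiplication by $\sigma_x^{-1}$ then provides a left $R$-module isomorphism $Re\to R\sigma_x(e)$. (This is exactly the ``isomorphic idempotents'' observation already made at the start of the section.) Applying it termwise and regrouping the double sum, together with $R\epsilon = \bigoplus_{i=1}^{s}Re_i$, I obtain
\begin{equation*}
{}_RR = \bigoplus_{e\in E}Re = \bigoplus_{i=1}^{s}\bigoplus_{x\in S}R\sigma_x(e_i) \cong \bigoplus_{i=1}^{s}(Re_i)^{m} \cong (R\epsilon)^{m}.
\end{equation*}

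Finally I would pass to endomorphism rings. Using the standard identifications $\End_R({}_RR)\cong R^{\mathrm{op}}$ and $\End_R(R\epsilon)\cong(\epsilon R\epsilon)^{\mathrm{op}}$ (via $\Hom_R(Re,Rf)\cong eRf$, with composition giving the opposite ring), the decomposition ${}_RR\cong(R\epsilon)^{m}$ yields
\begin{equation*}
R^{\mathrm{op}} \cong \End_R\big((R\epsilon)^{m}\big) \cong M_m\big((\epsilon R\epsilon)^{\mathrm{op}}\big) \cong M_m(\epsilon R\epsilon)^{\mathrm{op}},
\end{equation*}
where the last step is the transpose isomorphism $M_m(U^{\mathrm{op}})\cong M_m(U)^{\mathrm{op}}$. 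Taking opposite rings gives $R\cong M_m(\epsilon R\epsilon) = M_{|S|}(\epsilon A_{\alpha}^{\sigma}S\epsilon)$, as claimed.

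I expect the main obstacle to be bookkeeping rather than anything conceptual: one must verify $\sigma_x\ast e\ast\sigma_x^{-1}=\sigma_x(e)$ and the precise form of $\sigma_x^{-1}$ carefully through the cocycle $\alpha$, and keep the opposite-ring conventions in the endomorphism computation consistent. I would also emphasize where freeness is essential: without it an orbit could have fewer than $m$ elements, the summands $R\sigma_x(e_i)$ would repeat, and ${}_RR$ would fail to be a uniform $m$-fold copy of $R\epsilon$, so the argument would break down exactly as one expects given Proposition 5.2.
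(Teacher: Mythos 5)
Your proposal is correct and follows essentially the same route as the paper: both establish $A_{\alpha}^{\sigma}Se\cong A_{\alpha}^{\sigma}S\sigma_x(e)$ (the paper via explicit elements $\mu=\sigma_x(e)\sigma_x$ and $\nu=\alpha(x^-,x)^{-1}\sigma_{x^-}$, you via conjugation by the unit $\sigma_x$, which amounts to the same computation), then use freeness to get ${}_RR\cong(R\epsilon)^{|S|}$ and conclude by identifying $R$ with the endomorphism ring of $(R\epsilon)^{|S|}$. Your write-up merely spells out the final $\End$/opposite-ring bookkeeping in more detail than the paper does.
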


\begin{proof}
Note that for every $e \in E$ and $x \in S$, $A _{\alpha} ^{\sigma} S e$ and $A _{\alpha} ^{\sigma} S \sigma_x (e)$ are isomorphic. Indeed, let $\mu = \sigma_x(e) \sigma_x$ and $\nu = \alpha (x^-, x)^{-1} \sigma_{x^-}$ where $x^- = x^{-1}$. Using $\sigma_x (\alpha (x^-, x)) = \alpha (x, x^-)$, we get
\begin{equation*}
\mu \ast \nu = (\sigma_x(e) \sigma_x) \ast (\alpha (x^-, x)^{-1} \sigma_{x^-}) = \sigma_x(e) \sigma_x (\alpha (x^-, x)^{-1}) \alpha (x, x^-) = \sigma_x (e);
\end{equation*}
and
\begin{equation*}
\nu \ast \mu = (\alpha (x^-, x)^{-1} \sigma_{x^-}) \ast (\sigma_x (e) \sigma_x) = (\alpha (x^-, x)^{-1} \sigma_{x^-}) \ast \sigma_x \ast e = e.
\end{equation*}
Consequently, $A _{\alpha} ^{\sigma} S \epsilon \cong A _{\alpha} ^{\sigma} S \sigma_x (\epsilon)$. Since the action of $S$ on $E$ is free, and by our definition of $\epsilon$, we have
\begin{equation*}
_{A _{\alpha} ^{\sigma} S} A _{\alpha} ^{\sigma} S = \bigoplus _{x \in S} A _{\alpha} ^{\sigma} S \sigma_x (\epsilon) \cong (A _{\alpha} ^{\sigma} S \epsilon) ^{|S|}.
\end{equation*}
Therefore, $A _{\alpha} ^{\sigma} S$ is a matrix algebra over $\epsilon A _{\alpha} ^{\sigma} S \epsilon = \End _{A _{\alpha} ^{\sigma} S} (A _{\alpha} ^{\sigma} S \epsilon) ^{\textnormal{op}}$, and is Morita equivalent to $\epsilon A ^{\sigma} _{\alpha} S \epsilon$.
\end{proof}

In the situation that $\epsilon$ is a \textit{central idempotent} in $A$, that is, $\epsilon a = a \epsilon$ for every $a \in A$, we get a very simple case. Note that $\epsilon$ might not be in the center of $A _{\alpha} ^{\sigma} S$ as $\sigma_x \epsilon = \sigma_x (\epsilon) \sigma_x \neq \epsilon \sigma_x$. Since $\sigma_x$ acts on $A$ as an algebra automorphism, $\sigma_x (\epsilon)$ is a central idempotent in $A$ as well for every $x \in S$. Then
\begin{equation*}
A = \bigoplus _{x \in S} A \sigma_x (\epsilon)  = \bigoplus_{x \in S} \sigma_x (\epsilon) A \sigma_x (\epsilon)
\end{equation*}
is actually a direct sum of isomorphic algebras. Moreover, we have:
\begin{equation*}
\epsilon A _{\alpha} ^{\sigma} S \epsilon = \bigoplus _{x \in S} \epsilon A \sigma_x \epsilon = \bigoplus _{x \in S} \epsilon A \sigma_x (\epsilon) \sigma_x = \epsilon A \epsilon
\end{equation*}
since $\epsilon$ is in the center of $A$, and $\epsilon$ and $\sigma_x (\epsilon)$ are orthogonal for $1_S \neq x \in S$. Therefore, $A _{\alpha} ^{\sigma} S$ is actually an $|S| \times |S|$ matrix algebra over $\epsilon A \epsilon$, while $A$ is the subalgebra of all diagonal matrices. In particular, $A _{\alpha} ^{\sigma} S$, $A$, and $A^S$ all have the same homological dimensions, where $A^S$ is the fixed algebra; that is, $A^S = \{ a \in A \mid \sigma_x (a) = a \, \forall x \in S \}$.

For general crossed products, the structure of $A^S$ is hard to explore. However, if $\alpha (x, y)$ is contained in the center of $A$ for all $x, y \in S$ and the action of $S$ is free on $E$, we have an explicit description of $A^S$.

\begin{proposition}
If $S$ acts on $E$ freely and $\alpha (x, y)$ is contained in the center of $A$ for all $x, y \in S$, then the fixed algebra $A^S = \{ \sum _{x \in S} \sigma_x (a) \mid a \in A \}$. Furthermore, $A \sigma_x (\epsilon) \cong A^S$ (resp, $\sigma_x (\epsilon) A \cong A^S$) as left (resp., right) $A^S$-modules for every $x \in S$.
\end{proposition}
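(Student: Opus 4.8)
The plan is to reduce everything to the behaviour of the idempotents $\sigma_x(\epsilon)$ and the trace map $\trace \colon A \to A$, $\trace(a) = \sum_{x \in S} \sigma_x(a)$. The first step is to record the structural consequence of freeness: because $S$ acts freely on $E$, every orbit has size $|S|$, and choosing $\epsilon = e_1 + \dots + e_s$ with one representative per orbit makes $\{\sigma_x(e_i) \mid x \in S, \, 1 \leqslant i \leqslant s\}$ a bijective enumeration of $E$. Hence $\{\sigma_x(\epsilon)\}_{x \in S}$ is a set of pairwise orthogonal idempotents of $A$ with $\sum_{x \in S} \sigma_x(\epsilon) = \sum_{e \in E} e = 1_A$, giving the decomposition ${}_A A = \bigoplus_{x \in S} A\sigma_x(\epsilon)$ and its right analogue. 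The second step is to extract the two consequences of the hypothesis that each $\alpha(x,y)$ is central in $A$. Since condition (1) of a parameter set gives $\sigma_y \sigma_x = \iota_{\alpha(y,x)} \sigma_{yx}$, centrality of $\alpha(y,x)$ collapses the inner automorphism and yields $\sigma_y(\sigma_x(a)) = \sigma_{yx}(a)$ for all $a \in A$; in particular $\sigma_y(\sigma_x(\epsilon)) = \sigma_{yx}(\epsilon)$, so $S$ permutes the idempotents $\sigma_x(\epsilon)$ regularly, and a reindexing argument shows $\sigma_y(\trace(a)) = \trace(a)$, i.e. $\trace$ maps $A$ into $A^S$.

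With these in hand, I would construct for each $x \in S$ a pair of mutually inverse $A^S$-linear maps. Define $\rho_x \colon A^S \to A\sigma_x(\epsilon)$ by $a \mapsto a\sigma_x(\epsilon)$ and $\tau_x \colon A\sigma_x(\epsilon) \to A^S$ by $m \mapsto \trace(m)$; both are $A^S$-linear, and $\tau_x$ indeed lands in $A^S$ by the second step. For $a \in A^S$ one computes $\tau_x \rho_x(a) = \sum_{y \in S} \sigma_y(a\sigma_x(\epsilon)) = a \sum_{y \in S} \sigma_{yx}(\epsilon) = a$, using $\sigma_y(a) = a$ together with $\sigma_y(\sigma_x(\epsilon)) = \sigma_{yx}(\epsilon)$ and $\sum_{z \in S} \sigma_z(\epsilon) = 1_A$. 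Conversely, for $m = c\sigma_x(\epsilon)$ the orthogonality of the $\sigma_z(\epsilon)$ kills every term except $y = 1_S$, giving $\rho_x \tau_x(m) = m$. Thus $\rho_x$ and $\tau_x$ are inverse isomorphisms, which proves $A\sigma_x(\epsilon) \cong A^S$ as left $A^S$-modules. Taking $x = 1_S$, the identity $\tau_{1} \rho_{1} = \mathrm{id}$ reads $b = \trace(b\epsilon)$ for every $b \in A^S$, so $A^S \subseteq \{\trace(a) \mid a \in A\}$; combined with $\trace(A) \subseteq A^S$ from the first paragraph this gives the asserted equality $A^S = \{\sum_{x \in S} \sigma_x(a) \mid a \in A\}$.

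Finally, the right-module statement $\sigma_x(\epsilon)A \cong A^S$ follows by the mirror-image argument, replacing $\rho_x$ with $a \mapsto \sigma_x(\epsilon)a$ and again using $\trace$ as the inverse; the only computations that change are the placements of the idempotents, and centrality of $\alpha$ makes the relation $\sigma_y(\sigma_x(\epsilon)) = \sigma_{yx}(\epsilon)$ available on either side. The delicate point of the whole argument — and the place where both hypotheses are genuinely used — is the first step: freeness is exactly what forces $\sum_{x \in S} \sigma_x(\epsilon) = 1_A$ with orthogonal summands (a non-free action would produce overlapping or unequally sized orbits and break the decomposition), while centrality of $\alpha$ is what guarantees both that $\trace$ has image in $A^S$ and that the group action on the idempotents $\sigma_x(\epsilon)$ is honestly regular rather than twisted by inner automorphisms. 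Once these two facts are secured, the inverse-map verifications are purely formal.
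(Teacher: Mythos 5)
Your proposal is correct and follows essentially the same route as the paper: the same pair of mutually inverse maps $a \mapsto a\sigma_x(\epsilon)$ and $m \mapsto \sum_{y\in S}\sigma_y(m)$, the same use of freeness to get the orthogonal decomposition $1_A = \sum_{x\in S}\sigma_x(\epsilon)$, and the same derivation of $A^S \subseteq \trace(A)$ from $a = \sum_x \sigma_x(a\epsilon)$. You are merely more explicit than the paper about why centrality of $\alpha$ makes $\sigma$ an honest group action and where freeness enters, which is a welcome clarification but not a different argument.
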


\begin{proof}
Since $\alpha (x, y)$ lies in the center of $A$, $A$ is a $kS$-module, so $\sum _{x \in S} \sigma_x (a) \in A^S$ for every $a \in A$. To show the other inclusion, we take an element $a \in A^S$. Then $a = \sum _{x \in S} a \sigma_x (\epsilon) = \sum _{x \in S} \sigma_x (a \epsilon)$. The first statement is proved.

Since $A^S$ is a subalgebra of $A$, $A \sigma_x (\epsilon)$ is a left $A^S$-module for every $x \in S$. Define $\varphi: A^S \to A \sigma_x (\epsilon)$ by letting $\varphi(a) = a \sigma_x (\epsilon)$, and $\psi: A \sigma_x (\epsilon) \to A^S$ by sending $a \sigma_x (\epsilon)$ to $\sum_{y \in S} \sigma_y (a \sigma_x (\epsilon))$, which is contained in $A^S$. We check that both $\varphi$ and $\psi$ are $A^S$-module homomorphisms. Moreover, for $a \in A^S$,
\begin{equation*}
\psi (\varphi (a)) = \psi (a \sigma_x (\epsilon)) = \sum _{y \in S} \sigma_y (a \sigma_x (\epsilon)) = \sum _{y \in S} a \sigma_{yx} (\epsilon) = a;
\end{equation*}
and for $b \sigma_x (\epsilon) \in A \sigma_x (\epsilon)$,
\begin{equation*}
\varphi (\psi (b \sigma_x (\epsilon))) = \varphi (\sum_{y \in S} \sigma_y (b \sigma_x (\epsilon))) = \sum _{y \in S} \sigma_y (b) \sigma_{yx} (\epsilon) \sigma_x (\epsilon) = b \sigma_x (\epsilon).
\end{equation*}
Therefore, $\varphi$ and $\psi$ are inverse to each other, so $A\epsilon \cong A^S$ as left $A^S$-modules. The conclusion for right modules can be proved similarly.
\end{proof}

From this proposition we immediately deduce that $A$ is both a left and a right free $A^S$-module whenever the action of $S$ on $E$ is free and $\alpha (x,y)$ is contained in the center of $A$ for $x, y \in S$. However, usually $A$ is not a free $A^S$-bimodule; see Example 3.6 in \cite{L1}.

Recall that a ring $R$ is a \textit{domain} if it has no nonzero zero factors. Denote by $C(A)$ the center of $A$, and by $U(A)$ the multiplicative group of invertible elements in $A$. The following technical lemma will be used for normalization of crossed products.

\begin{lemma}
Let $D \subseteq C(A)$ be a domain.
\begin{enumerate}
\item For every $a \in D$ and $n \in \mathbb{N}$ which is a power of $p$, the polynomial $X^n - a \in D[X]$ has at most one root in $D$.
\item If $a \in A^S \cap U(A)$ and $\lambda \in D$ is a root of the above polynomial, then $\lambda \in A^S \cap U(A)$ as well.
\end{enumerate}
\end{lemma}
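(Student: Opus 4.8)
The plan is to treat the two parts separately, since part (1) is a statement about uniqueness of $p$-power roots in a commutative domain of characteristic $p$, while part (2) uses part (1) together with the fact that the collection $\sigma_x$ acts on $D$ and permutes its roots.

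For part (1), I would argue by characteristic. Since $D \subseteq C(A)$ is a commutative domain and $A$ is a $k$-algebra with $k$ of characteristic $p \geqslant 0$, the domain $D$ has characteristic $0$ or $p$. If $n$ is a nontrivial power of $p$, then over any commutative ring of characteristic $p$ the Frobenius identity gives $(X - \lambda)^n = X^n - \lambda^n$ for $n = p^k$, so if $\lambda$ is a root of $X^n - a$ then $X^n - a = X^n - \lambda^n = (X - \lambda)^n$; any second root $\mu$ would satisfy $(\mu - \lambda)^n = 0$, forcing $\mu = \lambda$ because $D$ is a domain. I would state this cleanly for $p > 0$; the characteristic-$0$ case (or $n = 1$) is degenerate and handled separately, since then $X^n - a$ has at most $n$ roots but the claim that there is \emph{at most one} in $D$ should be interpreted under the running convention that $n$ is a genuine $p$-power and $p > 0$ (otherwise the statement is about ordinary polynomial roots and one appeals to the standard bound together with the hypothesis on $S$ being a $p$-group).

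For part (2), suppose $a \in A^S \cap U(A)$ and $\lambda \in D$ with $\lambda^n = a$. First I would check $\lambda \in U(A)$: from $\lambda^n = a$ and $a$ invertible, $\lambda$ has a two-sided inverse $a^{-1}\lambda^{n-1}$, so $\lambda \in U(A)$. The substantive point is $\lambda \in A^S$, i.e.\ $\sigma_x(\lambda) = \lambda$ for all $x \in S$. The key observation is that each $\sigma_x$ restricts to a ring endomorphism fixing the subfield $k$, and since $a \in A^S$ we have $\sigma_x(\lambda)^n = \sigma_x(\lambda^n) = \sigma_x(a) = a = \lambda^n$. Thus $\sigma_x(\lambda)$ is also an $n$-th root of $a$. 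The final step is to conclude $\sigma_x(\lambda) = \lambda$ by invoking the uniqueness from part (1); for this I must ensure $\sigma_x(\lambda) \in D$, which follows because $D \subseteq C(A) \cap A^S$ is stable under $\sigma_x$ (indeed $D \subseteq A^S$ means $\sigma_x$ fixes $D$ pointwise, which would make the claim immediate---so more carefully, one uses that $\sigma_x$ maps $D$ into $D$). Both roots lie in the domain $D$, so part (1) forces $\sigma_x(\lambda) = \lambda$, giving $\lambda \in A^S$.

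The main obstacle I anticipate is bookkeeping around exactly where $\sigma_x(\lambda)$ lives and whether $\sigma_x$ genuinely preserves $D$: the hypotheses place $D$ inside $C(A) \cap A^S$ in the governing theorem, but this lemma states only $D \subseteq C(A)$, so I would need to be careful to use only that $\sigma_x(D) \subseteq D$ (stability of $D$ under the action), which must be verified or assumed from the context, rather than the stronger pointwise-fixed condition. Provided $D$ is $\sigma_x$-stable, the argument is clean; the only genuinely nontrivial ingredient is the Frobenius factorization $X^{p^k} - \lambda^{p^k} = (X - \lambda)^{p^k}$, which is precisely why the hypothesis restricts $n$ to powers of $p$ and why the characteristic-$p$ assumption on $k$ (and hence on $D$) is essential.
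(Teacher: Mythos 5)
Your proposal is correct and follows essentially the same route as the paper: the Frobenius identity $\lambda_1^n-\lambda_2^n=(\lambda_1-\lambda_2)^n$ in the commutative domain $D$ for part (1), and for part (2) the explicit inverse $\lambda^{n-1}a^{-1}$ together with the observation that $\sigma_x(\lambda)$ is another root, forced to equal $\lambda$ by uniqueness. The $\sigma_x$-stability of $D$ that you rightly flag is also left implicit in the paper's proof; it is harmless in the only application, where $D\subseteq C(A)\cap A^S$ is fixed pointwise by every $\sigma_x$.
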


\begin{proof}
If $\lambda_1, \lambda_2 \in D$ are two roots, then we have $\lambda_1^n - \lambda_2^n = 0$. But since $D$ is commutative and $n$ is a power of $p$, we have $(\lambda_1 - \lambda_2)^n = 0$. As $D$ is assumed to be a domain, this happens if and only if $\lambda_1 = \lambda_2$, so (1) is true.

If $a$ is invertible, then $aa^{-1} = 1$. But $a = \lambda^n$, so $\lambda (\lambda^{n-1} a^{-1}) = (\lambda^{n-1} a^{-1}) \lambda = 1$. That is, $\lambda \in U(A)$. For every $x \in S$, we have $(\sigma_x (\lambda))^n = \sigma_x (\lambda^n) = \sigma_x (a) = a$. That is, $\sigma_x (\lambda)$ is also a root of the polynomial $X^n - a$. But the root is unique, so $\sigma_x (\lambda) = \lambda$; i.e., $\lambda \in A^S$.
\end{proof}

We introduce some notation. For $x \in S$, define $h_x = \prod _{y \in S} \alpha (x, y)$. This is well defined since $\alpha (x, y)$ is contained in $C(A)$ for all $x, y \in S$. Because $\alpha (x, y) \alpha(xy, z) = \sigma_x (\alpha (y, z)) \alpha (x, yz)$, letting $z$ range over all elements in $S$ and taking the product, we get
\begin{equation}
\alpha (x, y)^{|S|} h_{xy} = \sigma_x (h_y) h_x.
\end{equation}

\begin{lemma}
Suppose that the following conditions hold:
\begin{enumerate}
\item There is a domain $D \subseteq C(A) \cap A^S$ containing all $\alpha (x, y)$ for $x, y \in S$.
\item The $|S|$-th root of $h_x$ exists in $D$ for every $x \in S$.
\end{enumerate}
Then $A _{\alpha} ^{\sigma} S$ is equivalent to a skew group algebra $A ^{\sigma'} S$.
\end{lemma}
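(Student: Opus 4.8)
The plan is to realize the claimed equivalence by a basis change $\sigma'_x = u_x\sigma_x$ with suitably chosen central units $u_x$, arranged so that the twisting factor $\alpha$ is killed. By Proposition 2.1, together with the basis-change formulas preceding it, it suffices to produce units $u_x \in U(A)$ for which $\alpha'(x,y) = u_x\sigma_x(u_y)\alpha(x,y)u_{xy}^{-1} = 1$ for all $x,y \in S$; the resulting parameter set is then $(\sigma',\mathbf 1)$, which presents $A_\alpha^\sigma S$ as a skew group algebra $A^{\sigma'}S$. I would search for the $u_x$ inside $D$, because every element of $D \subseteq C(A)\cap A^S$ is central and $S$-fixed. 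This collapses the formula twice over: centrality gives $\iota_{u_x} = \mathrm{id}$, so automatically $\sigma' = \sigma$; and $S$-invariance gives $\sigma_x(u_y) = u_y$, turning the requirement $\alpha' = 1$ into the purely commutative coboundary identity $\alpha(x,y) = u_{xy}\,u_x^{-1}u_y^{-1}$.

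The essential input is relation (5.1). Since each $h_y$ lies in $D\subseteq A^S$ we have $\sigma_x(h_y) = h_y$, and as all the $h$'s are central and invertible, (5.1) reduces to $\alpha(x,y)^{|S|} = h_x h_y h_{xy}^{-1}$. Now let $\lambda_x \in D$ be the $|S|$-th root of $h_x$ granted by hypothesis (2). Because $h_x = \prod_{y\in S}\alpha(x,y)$ is a product of units lying in $D\subseteq A^S$, it belongs to $A^S\cap U(A)$, so Lemma 5.8(2) shows that each $\lambda_x$ is a central unit contained in $A^S$; hence $u_x := \lambda_x^{-1}$ is a central, $S$-fixed unit. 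Raising $\lambda_x\lambda_y\lambda_{xy}^{-1}$ to the power $|S|$ gives $h_x h_y h_{xy}^{-1} = \alpha(x,y)^{|S|}$, so $\alpha(x,y)$ and $\lambda_x\lambda_y\lambda_{xy}^{-1}$ have equal $|S|$-th powers.

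The crux — and the only step where the hypotheses on $p$ genuinely intervene — is to upgrade this equality of $|S|$-th powers to an equality of the elements themselves. Here I would pass to the fraction field $F = \mathrm{Frac}(D)$, of characteristic $p$; we may assume $p>0$, as $S$ is a $p$-group and the case $S=1$ is vacuous. Writing $n = |S|$, a power of $p$, and $\mu = \lambda_x\lambda_y\lambda_{xy}^{-1}\in F$, the Frobenius (freshman's) identity yields $(\alpha(x,y)-\mu)^n = \alpha(x,y)^n - \mu^n = 0$ in the field $F$, whence $\alpha(x,y) = \mu = \lambda_x\lambda_y\lambda_{xy}^{-1}$; equivalently this is the uniqueness of $p$-power roots in a domain recorded in Lemma 5.8(1). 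I expect this root-extraction to be the main obstacle: without the characteristic-$p$ Frobenius (or Lemma 5.8) one would only control $\alpha$ up to an $|S|$-th root of unity, so the argument leans essentially on $|S|$ being a $p$-power in characteristic $p$.

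Finally I would substitute the now-established identity $\alpha(x,y) = \lambda_x\lambda_y\lambda_{xy}^{-1}$. Since $u_x = \lambda_x^{-1}$ is central and fixed by $S$, we have $\sigma'_x = \sigma_x\iota_{u_x} = \sigma_x$ and
\[
\alpha'(x,y) = u_x\sigma_x(u_y)\alpha(x,y)u_{xy}^{-1} = \lambda_x^{-1}\lambda_y^{-1}\big(\lambda_x\lambda_y\lambda_{xy}^{-1}\big)\lambda_{xy} = 1.
\]
Thus the basis change $\{\lambda_x^{-1}\sigma_x\}_{x\in S}$ converts the parameter set into $(\sigma,\mathbf 1)$, and Proposition 2.1 identifies $A_\alpha^\sigma S$ with the skew group algebra $A^\sigma S$, as required.
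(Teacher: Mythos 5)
Your proposal is correct and follows essentially the same route as the paper: normalize by the $|S|$-th roots of the $h_x$ (central, $S$-fixed units by Lemma 5.8), use relation (5.1) to show the new cocycle has trivial $|S|$-th power, and kill it by uniqueness of $p$-power roots in the domain, then invoke Proposition 2.1. The only cosmetic difference is that you extract the identity $\alpha(x,y)=\lambda_x\lambda_y\lambda_{xy}^{-1}$ before substituting (working in $\mathrm{Frac}(D)$, which in fact handles the invertibility of the roots a bit more carefully than the paper's remark that $\alpha'(x,y)\in D$), whereas the paper applies the same root-uniqueness argument directly to $\alpha'(x,y)^{|S|}=1$.
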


\begin{proof}
For each $x \in S$, let $u_x \in D$ be the $|S|$-th root of $h_x$, which is unique and is contained in $U(A) \cap A^S$ as well by the previous lemma. Now we define another parameter set $(\alpha', \sigma')$ by letting $\sigma_x' = u_x^{-1} \sigma_x$. Then $\alpha'(x, y) = u_x^{-1} u_y^{-1} \alpha(x, y) u_{xy}$. Taking the $|S|$-th power we get
\begin{equation*}
\alpha'(x, y) ^{|S|} = h_x^{-1} h_y^{-1} h_{xy} \alpha (x, y)^{|S|} = 1
\end{equation*}
by (5.1). Note that $\alpha' (x, y)$ is also contained in $D$. Therefore, $\alpha'(x, y) = 1$.

We have proved that the parameter set $(\alpha, \sigma)$ is equivalent to $(\alpha', \sigma')$ with $\alpha' (x, y) = 1$. Therefore, by Proposition 2.1, $A _{\alpha} ^{\sigma} S$ is equivalent to a skew group algebra $A ^{\sigma'} S$.
\end{proof}

Note that $\sigma_x' = u_x^{-1} \sigma_x$. Therefore, $a \in A$ is fixed by $\sigma_x$ if and only it is fixed by $\sigma_x'$. In other words, the fixed algebras for these two equivalent crossed products coincide, so can be denoted by $A^S$ again.

Now we prove Theorem 1.3, generalizing a main result in \cite{L1, L2}.

\begin{theorem}
Suppose that conditions (1) and (2) in the previous lemma hold. Then:
\begin{enumerate}
\item If $A$ is a finite dimensional algebra, then $\gldim A _{\alpha} ^{\sigma} G < \infty$ if and only if $\gldim A < \infty$ and $S$ acts on $E$ freely.
\item If $S$ acts freely on $E$, then $A _{\alpha} ^{\sigma} G$ and $A$ have the same global dimension. Moreover, if $A$ as an $A^S$-bimodule has a summand $A^S$, then $A _{\alpha} ^{\sigma} G$ and $A$ have the same strong global dimension.
\end{enumerate}
\end{theorem}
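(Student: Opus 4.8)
\emph{Strategy.} The plan is to reduce $A_{\alpha}^{\sigma}G$ first to $A_{\alpha}^{\sigma}S$, then to a skew group algebra, and finally to compare the latter with $A$ using the fixed ring $A^S$ as an intermediate bridge. Since $|G:S|$ is coprime to $p=\operatorname{char} k$ it is invertible in $A$, so by Corollary 3.7 the three homological dimensions of $A_{\alpha}^{\sigma}G$ coincide with those of $A_{\alpha}^{\sigma}S$. By Lemma 5.9 the standing hypotheses (1) and (2) let me replace $A_{\alpha}^{\sigma}S$ by an equivalent skew group algebra $A^{\sigma'}S$; because each normalizing element $u_x$ is central, the underlying automorphisms are still $\sigma_x$ and the fixed ring is again $A^S$. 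The computational fact I would exploit throughout is that, when $S$ acts freely on $E$, the idempotents $\{\sigma_x(\epsilon)\}_{x\in S}$ are orthogonal and sum to $1_A$, so $\trace(\epsilon)=\sum_{x\in S}\sigma_x(\epsilon)=1$ and hence, by Proposition 4.2, the trivial representation of $A^{\sigma'}S$ is projective.

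\emph{Part (1) and the global dimension in (2).} If $\gldim A_{\alpha}^{\sigma}G<\infty$, then $\gldim A_{\alpha}^{\sigma}S<\infty$; Remark 3.4 applied to the split extension $A\subseteq A_{\alpha}^{\sigma}S$ forces $\gldim A<\infty$, and the contrapositive of Proposition 5.2 forces $S$ to act freely on $E$. Conversely, once $S$ acts freely the trivial representation of $A^{\sigma'}S$ is projective, so Corollary 4.4 gives $\gldim A^{\sigma'}S=\gldim A$, which is finite exactly when $\gldim A$ is finite. Transporting these conclusions back through Lemma 5.9 and Corollary 3.7 yields the equivalence in (1) and the equality $\gldim A_{\alpha}^{\sigma}G=\gldim A$ in (2).

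\emph{Strong global dimension.} This is the substantial part, and I would obtain it by sandwiching $\sgldim A^{\sigma'}S$ between two copies of $\sgldim A$, bridged by $A^S$. By Proposition 5.6, $A^{\sigma'}S$ is Morita equivalent to the corner $\epsilon A^{\sigma'}S\epsilon$, and I would identify this corner with $A^S$: the module $A^{\sigma'}S\epsilon$ restricts to ${}_AA$ and carries the natural action of the $\sigma_x$, so $\epsilon A^{\sigma'}S\epsilon\cong\End_{A^{\sigma'}S}(A^{\sigma'}S\epsilon)^{\mathrm{op}}\cong A^S$ (this is where Proposition 5.7 enters). As Morita equivalence induces a length- and indecomposability-preserving equivalence $K^b({}_{A^{\sigma'}S}P)\cong K^b({}_{A^S}P)$, it preserves strong global dimension, so $\sgldim A^{\sigma'}S=\sgldim A^S$. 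Two applications of Remark 3.4 then close the loop: the split extension $A\subseteq A^{\sigma'}S$, with ${}_A(A^{\sigma'}S)$ free hence projective, $(A^{\sigma'}S)_A$ free hence flat, and the identity component splitting ${}_A(A^{\sigma'}S)_A$ as a bimodule, gives $\sgldim A\le\sgldim A^{\sigma'}S$; while the extension $A^S\subseteq A$, where Proposition 5.7 makes $A$ free on both sides over $A^S$ and the extra hypothesis supplies the bimodule summand ${}_{A^S}A_{A^S}=A^S\oplus B$, gives $\sgldim A^S\le\sgldim A$. Combining, $\sgldim A\le\sgldim A^{\sigma'}S=\sgldim A^S\le\sgldim A$, so all four quantities agree, and Corollary 3.7 together with Lemma 5.9 transports the equality to $A_{\alpha}^{\sigma}G$.

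\emph{Main obstacle.} The hard part will be the $A^S$-bridge: making the corner identification $\epsilon A^{\sigma'}S\epsilon\cong A^S$ precise, and checking that $\sgldim$ is genuinely a Morita invariant, i.e. that the length $l(-)$ of perfect complexes is preserved under the induced equivalence of homotopy categories. I would also need to confirm the background hypotheses of Remark 3.4 for both extensions; in particular that $A^S$ is left Noetherian, which follows since ${}_{A^S}A$ is finitely generated free of rank $|S|$ and $A^S$ is a direct summand of the Noetherian module ${}_{A^S}A$. The remaining verifications are routine bookkeeping with the two split extensions.
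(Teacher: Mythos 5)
Your proposal is correct in outline, but it takes a genuinely different route from the paper at the decisive step. The two reductions agree exactly: Corollary 3.7 (via the Sylow index being invertible) brings everything down to $A_{\alpha}^{\sigma}S$, and the normalization lemma replaces that crossed product by a skew group algebra $A^{\sigma'}S$ with the same fixed ring. At that point the paper simply cites Theorem 1.1 of \cite{L1} and Theorem 1.1 of \cite{L2} for the skew-group-algebra case, whereas you reconstruct those results from the paper's own machinery: freeness gives $\trace(\epsilon)=\sum_{x\in S}\sigma_x(\epsilon)=1$, hence a projective trivial representation (Proposition 4.2) and $\gldim A^{\sigma'}S=\gldim A$ (Proposition 4.1/Corollary 4.4), with Proposition 5.2 supplying the converse direction of (1); and for the strong global dimension you sandwich $\sgldim A\leqslant\sgldim A^{\sigma'}S=\sgldim A^{S}\leqslant\sgldim A$ using Remark 3.4 on the two split extensions $A\subseteq A^{\sigma'}S$ and $A^{S}\subseteq A$ together with the Morita equivalence of Proposition 5.6. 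This buys self-containedness, though it is essentially the internal argument of \cite{L1, L2}, so you have relocated rather than bypassed the technical core, which sits in the corner identification $\epsilon A^{\sigma'}S\epsilon\cong A^{S}$ that you flag as the main obstacle. That identification is true and completable with the paper's tools: by Proposition 2.2(3), $\epsilon A^{\sigma'}S\epsilon=\End_{A^{\sigma'}S}(A^{\sigma'}S\epsilon)^{\mathrm{op}}\cong\bigl(\End_A(A^{\sigma'}S\epsilon)^{S}\bigr)^{\mathrm{op}}$, and under free action $A^{\sigma'}S\epsilon$ is free of rank one over $A$ on the $S$-invariant generator $\bigl(\sum_{x\in S}\sigma_x\bigr)\epsilon$, so $\End_A(A^{\sigma'}S\epsilon)\cong A^{\mathrm{op}}$ with $S$ acting through $\sigma$, and taking invariants yields $(A^{S})^{\mathrm{op}}$.

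Two repairs are needed in the bookkeeping. First, Morita invariance of $\sgldim$ should be checked against the definition via $l(-)$; it does hold, since a progenerator equivalence is exact, preserves $\additive(-)$, indecomposability, and amplitude termwise, hence preserves length. Second, your argument that $A^{S}$ is left Noetherian is circular as written: for the free module $_{A^{S}}A\cong (A^{S})^{|S|}$, being Noetherian over $A^{S}$ is equivalent to, not a consequence of, $A^{S}$ being left Noetherian. Use instead the right $A^{S}$-linear projection $\rho\colon A\to A^{S}$ supplied by the bimodule hypothesis: for any left ideal $I$ of $A^{S}$ one gets $\rho(AI)\subseteq I$ and hence $AI\cap A^{S}=I$, so the ascending chain condition descends from $A$. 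Since the Noetherianity of $A^{S}$ is only needed for the strong global dimension claim, where that bimodule hypothesis is in force, this closes the gap.
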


\begin{proof}
By Theorem 1.1, we can assume that $G = S$. By the previous lemma, $A _{\alpha} ^{\sigma} S$ is isomorphic to the skew group algebra $A ^{\sigma'} S$. In particular, they have the same homological dimensions and the same fixed algebras. Then the conclusion follows from Theorem 1.1 in \cite{L1} and Theorem 1.1 in \cite{L2}.\footnote{Although in these two papers we mainly deal with finite dimensional algebras, the conclusion that $A$ and $A^{\alpha'} G$ share the same homological dimensions when $S$ acts freely on $E$ still holds for semiprimary Noetherian $k$-algebras.}
\end{proof}

\end{document}